\newcommand{\PP}{\mathbb{P}}
\newcommand{\NN}{\mathbb{N}}
\newcommand{\ZZ}{\mathbb{Z}}
\newcommand{\QQ}{\mathbb{Q}}
\newcommand{\RR}{\mathbb{R}}
\newcommand{\CC}{\mathbb{C}}
\newcommand{\Res}{\begin{abstract}}
\theoremstyle{plain}
\newtheorem{thm}{Theorem}[section] 
\newtheorem{prop}[thm]{Proposition}
\newtheorem{lemma}[thm]{Lemma}
\newtheorem{cor}[thm]{Corollary}
\theoremstyle{remark}
\theoremstyle{definition}
\newtheorem{rem}[thm]{Remark}
\newtheorem{defi}[thm]{Definition}
\newtheorem{defis}[thm]{Definitions}
\newtheorem{recalls}[thm]{Recalls}
\newtheorem{notas}[thm]{Notations}
\title[Skew symmetric logarithms and geodesics on $O_n(\RR)$]{Skew symmetric logarithms and geodesics on $O_n(\RR)$}
\author[A. Dolcetti]{Alberto Dolcetti}
\address{Dipartimento di Matematica e Informatica ``Ulisse Dini''\\Viale Morgagni 67/a\\50134 Firenze, ITALIA}\email{alberto.dolcetti@unifi.it}
\author[D. Pertici]{Donato Pertici}
\address{Dipartimento di Matematica  e Informatica ``Ulisse Dini''\\Viale Morgagni 67/a\\50134 Firenze, ITALIA}
\email{donato.pertici@unifi.it}
\begin{document}

\selectlanguage{english}

\parindent 0pt

\begin{abstract}
We investigate  the connections between the differential-geometric properties of the exponential map from the space of real skew symmetric matrices onto the group of real special orthogonal matrices and the manifold of real orthogonal matrices equipped with the Riemannian structure induced by the Frobenius metric.
\end{abstract}

\maketitle

\tableofcontents

\renewcommand{\thefootnote}{\fnsymbol{footnote}}
\footnotetext{
This research was partially supported by MIUR-PRIN: ``Variet\`a reali e complesse: geo\-me\-tria, topologia e analisi armonica'' and by GNSAGA-INdAM.}
\renewcommand{\thefootnote}{\arabic{footnote}}
\setcounter{footnote}{0}

{\scshape{Keywords.}} Skew symmetric and orthogonal matrices, Singular Value Decomposition of skew symmetric matrices, Pfaffian, exponential map, skew symmetric and principal logarithms, trace and Frobenius metrics, Riemannian manifolds, geodesic curves, diameter, pairs of (weakly) diametral orthogonal matrices.
\medskip

{\scshape{Mathematics~Subject~Classification~(2010):}} 15A16, 53C22.

\section*{Introduction}

The \emph{exponential map} induces a surjection from the vector space $\mathcal{A}_n$ of real \emph{skew symmetric matrices} of order $n$ and the manifold $SO_n$ of real \emph{special orthogonal matrices} of the same order. The study and the explicit computation of the fibers of this map are relevant subjects in matrix theory and in its applications (see for instance \cite{Hi2008}). Here we analyse some of their differential-geometric properties. 

The set of all \emph{real skew symmetric logarithms} of $R \in SO_n$ (i.e. the fiber over a matrix $R$) can be described in terms of the set $\mathcal{A}plog(R)$ of its \emph{real skew symmetric principal logarithms} (i.e. the real skew symmetric logarithms of $R$ with eigenvalues having absolute value in $[0, \pi]$) and of linear combinations with integer coefficients of suitable skew symmetric matrices (Theorem \ref{all_skew_symmetric_logarithms}). 

$\mathcal{A}plog(R)$ is (implicitly) studied in \cite{GaXu2002}, where the so-called \emph{Rodrigues exponential formula} for skew symmetric matrices of order $3$ is extended to any order $n$ (Proposition \ref{RGX_exp_form}); we point out the role of the \emph{Singular Value Decomposition} of a skew symmetric matrix (Proposition \ref{SVD} and Definition \ref{SVD-system}).

Also the problems of the existence and of uniqueness of real skew symmetric principal logarithms of $R \in SO_n$ are approached in \cite{GaXu2002}, while many differential-geometric properties of $\mathcal{A}plog(R)$, discussed here, could be new: $\mathcal{A}plog(R)$  has a differential-geometric structure, depending on the presence of $-1$ among the eigenvalues of $R$ (Proposition \ref{class_with_Pfaff}, Theorem \ref{Characterize_plog} and Corollary \ref{Cor_Characterize_plog}). In particular in case of matrices having $-1$ as an eigenvalue, it is diffeomorphic to the manifold of real skew symmetric orthogonal matrices of order equal to the multiplicity of the eigenvalue $-1$ and has two connected components.

We are also able to describe all real skew symmetric logarithms of a matrix $R \in SO_n$ in some particular, but relevant, cases, where they form a discrete lattice of rank $\lfloor n/2 \rfloor$ in $\mathcal{A}_n$  (Theorem \ref{partskewsymmlogTHM} and Remark \ref{REMpartskewsymmlogTHM}).

The exponential map is involved in the description of the \emph{geodesic curves} on the manifold $(O_n, g)$ of real \emph{orthogonal matrices} equipped with the metric induced by the \emph{trace metric} $\overline{g}$ or by the \emph{Frobenius metric} $g$  (Recalls \ref{trace_metric}).

In \cite{DoPe2015} we have studied the trace metric on the whole manifold of real nonsingular matrices $GL_n$, where it defines a structure of semi-Riemannian manifold. This metric is often considered also in the setting of positive definite real matrices (see for instance \cite{Lan1999} Chapt.XII, \cite{BhaH2006} \S2, \cite{Bha2007} Chapt.6, \cite{MoZ2011} \S3), where it defines a structure of Riemannian manifold.
On $O_n$ the trace metric $\overline{g}$ is the opposite of the Frobenius metric $g$ (Lemma \ref{g=-g-segnato}). $(O_n, g)$ is an Einstein Riemannian manifold whose main properties are listed in \ref{conseguenze_bi-invarianza}. Moreover we get a suitable \emph{foliation} on $GL_n$ with leaves isometric to $O_n$ (Proposition \ref{foliations}).

We describe the geodesic curves on $O_n$ with respect to $g$ (and to $\overline{g}$) (Proposition \ref{conseguenze_bi-invarianza} (b), Remark \ref{princ_geod} and Proposition \ref{rat_numb_SO_n}) and in particular the minimal geodesics joining $G$ and $H$, which turn out to be in bijection with the skew symmetric principal logarithms of $G^{-1} H$, furthermore we express the distance $d(G,H)$ in terms of the eigenvalues of $G^{-1} H$ (Theorem \ref{minimalgeodesics} and  Remark \ref{minimal_length}).

After computing the \emph{diameter} of $(O_n, g)$ as $\sqrt{2 \lfloor n/2 \rfloor} \pi$ (Corollary \ref{misura-diametro}), we introduce the notions of \emph{weakly diametral pair of points} and of \emph{diametral pair of points} among the pairs of real orthogonal matrices (Definition \ref{weakly_diametral}) and characterize them in terms of the manifolds of real symmetric orthogonal matrices of order $n$ with the eigenvalues $1$ of multiplicity $p$ and $-1$ of multiplicity $n-p$, each one of them is diffeomorphic to the Grassmannian of $p$-dimensional vector subspaces of $\RR^n$ (Propositions \ref{grassmannian}, \ref{weakly_diametral_pairs} and \ref{diametral_pairs}).

\medskip

\textbf{Acknowledgement.} We want to thank the anonymous referee for many useful and precious suggestions about the matter and the writing of this paper.

\section{Singular Value Decomposition for skew symmetric matrices}

\begin{recalls}\label{initial_recalls}
In this paper all matrices are supposed to be square of order $n$. 

We denote by $M_n$, $\mathcal{A}_n$, $GL_n $, $O_n$ and $SO_n$ respectively the vector space of real matrices of order $n$, its subspace of skew symmetric matrices, the multiplicative group of nondegenerate matrices of $M_n$, the group of real orthogonal matrices and its special subgroup. 

$O_n$ is a differentiable submanifold of $GL_n$ of dimension $n(n-1)/2$ with two connected components $SO_n$ and $O_n^-$ (the orthogonal matrices with determinant $-1$).

As usual $I=I_n$ is the identity matrix of order $n$ and we put
$E_0 := 
\left(
\begin{array}{rr}
0 & 1  \\ 
-1 & 0  
\end{array}
\right)
$. Note that $E_0$ and $-E_0 = E_0^{-1} =E_0^T$ ($A^T$ is the transpose of the matrix $A$) are the unique real skew symmetric orthogonal matrices of order $2$ and that $-E_0 = P_0^T E_0 P_0$, where $P_0 = P_0^T$ is the permutation (orthogonal) matrix 
$ 
\left(
\begin{array}{rr}
0 & 1  \\ 
1 & 0  
\end{array}
\right)
$. Analogously: $-diag(\underbrace{E_0, \cdots , E_0}_{m}) = diag(\underbrace{P_0, \cdots , P_0}_{m})^T \, diag(\underbrace{E_0, \cdots , E_0}_{m}) \, diag(\underbrace{P_0, \cdots , P_0}_{m})$, where $diag(A_1, \cdots , A_m)$ denotes the \emph{block diagonal matrix} with blocks $A_1, \cdots , A_m$.

\medskip

Next Proposition collects some facts about the \emph{Singular Value Decomposition} of a skew symmetric matrix; for general information on this subject we refer for instance to \cite{HoJ2013}  and to \cite{OttPaol2015} for a more geometric point of view.
\end{recalls}

\begin{prop}\label{SVD}
Every matrix $A \in \mathcal{A}_n \setminus \{ 0 \}$ has a unique Singular Value Decomposition

$$
(*) \ \ \ A = \sum_{j = 1}^s \zeta_j A_j
$$

where $\zeta_1, \cdots \zeta_s$ are the distinct nonzero singular values of $A$, while the matrices $A_1, \cdots , A_s$ are  in $\mathcal{A}_n \setminus \{ 0 \}$ and are uniquely determined in this set by $(*)$ and by the conditions:

$A_j^3 = - A_j$ for every $j = 1 , \cdots , s$  and

$A_jA_h= 0$ as soon as $j \ne h$

(when $s=1$, the second condition is vacuous).
\end{prop}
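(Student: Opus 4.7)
The plan is to derive both existence and uniqueness from the real block-diagonalization of skew symmetric matrices together with the uniqueness of the spectral decomposition of the symmetric positive semidefinite matrix $AA^T$.

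For existence, I would invoke the standard real normal form: there exist $Q \in O_n$ and positive reals $\lambda_1 \ge \cdots \ge \lambda_k > 0$ such that
\[
Q^T A Q = diag(\lambda_1 E_0, \ldots, \lambda_k E_0, 0, \ldots, 0).
\]
Grouping coincident values, let $\zeta_1 > \cdots > \zeta_s > 0$ be the distinct elements of $\{\lambda_1, \ldots, \lambda_k\}$. For each $j$, let $B_j$ be the block-diagonal matrix that has a copy of $E_0$ in the slots where $\lambda_i = \zeta_j$ and zero elsewhere, and set $A_j := Q B_j Q^T$. Using $E_0^3 = -E_0$ and the fact that the nonzero blocks of $B_j$ and $B_h$ occupy disjoint coordinate pairs when $j \ne h$, each $A_j$ lies in $\mathcal{A}_n \setminus \{0\}$, satisfies $A_j^3 = -A_j$, and $A_j A_h = 0$ for $j \ne h$. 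The identity $A = \sum_j \zeta_j A_j$ is immediate, and computing $A A^T = Q \, diag(\lambda_1^2 I_2, \ldots, \lambda_k^2 I_2, 0, \ldots) \, Q^T$ shows that the $\zeta_j$'s are exactly the distinct nonzero singular values of $A$.

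For uniqueness, given any decomposition satisfying the listed conditions I would set $P_j := -A_j^2$. Skew symmetry of $A_j$ makes $P_j$ symmetric, while the relation $A_j^3 = -A_j$ yields $P_j^2 = A_j^4 = A_j \cdot A_j^3 = -A_j^2 = P_j$, so $P_j$ is an orthogonal projector. The assumption $A_j A_h = 0$ for $j \ne h$ forces $P_j P_h = A_j^2 A_h^2 = 0$. Expanding and using these relations gives
\[
A A^T = -A^2 = -\sum_{j,h} \zeta_j \zeta_h A_j A_h = \sum_j \zeta_j^2 \, P_j,
\]
which is a spectral decomposition of $A A^T$ with distinct positive eigenvalues $\zeta_j^2$ and pairwise orthogonal eigenprojections $P_j$. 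By the uniqueness of the spectral decomposition, both the $\zeta_j$'s (the nonzero singular values of $A$) and the $P_j$'s are intrinsically determined by $A$. Finally, since $A_h P_j = -A_h A_j \cdot A_j = 0$ for $h \ne j$ and $A_j P_j = -A_j^3 = A_j$, one has $A P_j = \zeta_j A_j$, so each $A_j = \zeta_j^{-1} A P_j$ is uniquely reconstructed from $A$.

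I do not expect a serious obstacle: existence is a repackaging of the real normal form, and uniqueness reduces to recognising $\sum_j \zeta_j^2 P_j$ as the spectral decomposition of $A A^T$. The only delicate point is checking that the imposed algebraic conditions $A_j^3 = -A_j$ and $A_j A_h = 0$ are precisely what make the $P_j$'s into pairwise orthogonal projectors, which is the hypothesis under which the spectral uniqueness theorem applies.
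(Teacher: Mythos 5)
Your proof is correct, but it takes a genuinely different route from the paper. The paper simply cites the known existence and uniqueness of the Singular Value Decomposition for a general real matrix (Ottaviani--Paoletti, Theorem 3.4), under the conditions $A_jA_j^TA_j=A_j$ and $A_kA_j^T=A_j^TA_k=0$, and then observes that the uniqueness forces each $A_j$ to inherit the skew symmetry of $A$, at which point those conditions collapse to $A_j^3=-A_j$ and $A_jA_h=0$. Your argument is self-contained: you build existence directly from the real orthogonal normal form $Q^TAQ=diag(\lambda_1E_0,\ldots,\lambda_kE_0,0,\ldots,0)$ (the same normal form the paper invokes later, in Remark \ref{explicitlogprin}, via Horn--Johnson Cor.\ 2.5.11), and you prove uniqueness by showing that $P_j:=-A_j^2$ are pairwise orthogonal symmetric idempotents, so that $AA^T=\sum_j\zeta_j^2P_j$ is the spectral decomposition of $AA^T$; by the uniqueness of the spectral decomposition the $\zeta_j$ and $P_j$ are intrinsic, and the identity $AP_j=\zeta_jA_j$ recovers each $A_j$. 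The computations you need do go through: $P_j^2=A_j^4=-A_j^2=P_j$, $P_jP_h=A_j(A_jA_h)A_h=0$ for $j\neq h$, and $AP_j=\zeta_jA_j$ since $A_hA_j^2=(A_hA_j)A_j=0$ for $h\neq j$ while $A_j(-A_j^2)=A_j$. What your route buys is transparency and independence from the general SVD machinery, at the small cost of having to set up the normal form and the projector bookkeeping; what the paper's route buys is brevity, since it delegates all the work to the quoted general theorem and only has to observe that skew symmetry propagates to the summands.
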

 
\begin{proof}
It is well-known that every $A \in \mathcal{M}_n \setminus \{ 0 \}$ admits a unique Singular Value Decomposition
$
A = \sum_{j = 1}^s \zeta_j A_j  \  \  ,
$
where $\zeta_1, \cdots \zeta_s$ are the distinct nonzero singular values of $A$, and the matrices $A_1, \cdots , A_s \in \mathcal{M}_n \setminus \{ 0 \}$ are uniquely determined in this set by $(*)$ and by the conditions:

$A_j A_j^T A_j=A_j$ for every $j = 1 , \cdots ,s$ and

$A_k A_j^T=A_j^T A_k=0$ as soon as $j \neq k$.

We refer for instance to \cite{OttPaol2015} \ Theorem 3.4, for a proof of this statement. By the skew symmetry of $A$ and the uniqueness of the Singular Value Decomposition we get easily the skew symmetry of every $A_j$'s.  So the previous conditions about the $A_j$'s
become equivalent to the conditions of our statement.  
\end{proof}

\begin{defi}\label{SVD-system}
From now on, we will say that a set of nonzero skew symmetric matrices $A_1, \cdots , A_s$ is an \emph{SVD system} if $A_j^3 = - A_j$ for every $j = 1 , \cdots , s$  and $A_jA_h= 0$ as soon as $j \ne h$. 

The previous Proposition justifies this definition.

\medskip

The exponential map of a linear combination of an SVD system has an interesting expression, we refer as Rodrigues exponential formula:
\end{defi}

\begin{prop}[see \cite{GaXu2002} \S 2]\label{RGX_exp_form} $ \\ $ Let $A_1, \cdots , A_s \in \mathcal{A}_n \setminus \{ 0 \}$ be an SVD system. Then 
$$
exp(\sum_{j =1}^s \zeta_jA_j) = I_n + \sum_{j = 1}^s[sin(\zeta_j) A_j + (1-\cos(\zeta_j))A_j^2].
$$
\end{prop}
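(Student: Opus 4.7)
The plan is to factor the exponential as a product of commuting exponentials, evaluate each factor via the classical single-matrix Rodrigues formula, and then exploit the orthogonality relations $A_jA_h=0$ to collapse the product.

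First I would observe that, because $A_jA_h=0$ for $j\ne h$, the matrices $\zeta_jA_j$ pairwise commute (indeed they multiply to zero). Therefore
\[
\exp\!\Bigl(\sum_{j=1}^s\zeta_jA_j\Bigr)=\prod_{j=1}^s\exp(\zeta_jA_j).
\]
This reduces the statement to the single-summand case plus a combinatorial cleanup.

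Next I would treat the single factor $\exp(\zeta_jA_j)$ by direct power-series computation, using only the relation $A_j^3=-A_j$. The powers of $A_j$ cycle as $A_j,\,A_j^2,\,-A_j,\,-A_j^2,\,A_j,\ldots$, so segregating even and odd degrees in $\sum_{k\ge 0}\frac{(\zeta_j A_j)^k}{k!}$ yields
\[
\exp(\zeta_jA_j)=I_n+\Bigl(\sum_{m\ge 0}\frac{(-1)^m\zeta_j^{2m+1}}{(2m+1)!}\Bigr)A_j+\Bigl(\sum_{m\ge 1}\frac{(-1)^{m-1}\zeta_j^{2m}}{(2m)!}\Bigr)A_j^2,
\]
which is exactly $I_n+\sin(\zeta_j)A_j+(1-\cos(\zeta_j))A_j^2$.

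Finally I would expand the product $\prod_{j=1}^s\bigl(I_n+\sin(\zeta_j)A_j+(1-\cos(\zeta_j))A_j^2\bigr)$. Any cross term in the expansion involves a product $A_j^{a}A_h^{b}$ with $j\ne h$ and $a,b\in\{1,2\}$; since $A_jA_h=0$, one obtains $A_j^aA_h^b = A_j^{a-1}(A_jA_h)A_h^{b-1}=0$ in every such case. So only the ``identity and one-index'' terms survive, producing precisely $I_n+\sum_{j=1}^s[\sin(\zeta_j)A_j+(1-\cos(\zeta_j))A_j^2]$, as required.

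No real obstacle arises: the argument is a careful bookkeeping exercise whose only subtlety is verifying that the SVD-system relations are enough both to commute the factors in the exponential and to kill all cross terms in the final product. Both follow at once from $A_jA_h=0$ (for commutativity, via $(\zeta_jA_j)(\zeta_hA_h)=0$; for cross-term vanishing, via associativity applied to $A_j^aA_h^b$).
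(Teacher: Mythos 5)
Your proof is correct, and it is clean and self-contained. The paper itself does not reprove this Proposition; it simply cites Gallier--Xu \cite{GaXu2002} \S 2, where the formula is established via the block-diagonal normal form of a skew symmetric matrix and the classical $2\times 2$ Rodrigues computation. Your argument achieves the same result purely from the two algebraic axioms of an SVD system, which is both more economical and closer in spirit to how the paper uses SVD systems elsewhere (e.g.\ in the proof of Theorem \ref{all_skew_symmetric_logarithms}). The three key observations are all sound: $A_jA_h=0=A_hA_j$ for $j\ne h$ gives commutativity of the summands, hence the exponential of the sum factors as the product of exponentials; the relation $A_j^3=-A_j$ forces the cyclic pattern $A_j,A_j^2,-A_j,-A_j^2,\dots$ in the powers, so the power series of each factor collapses to $I_n+\sin(\zeta_j)A_j+(1-\cos(\zeta_j))A_j^2$; and the cross terms in the expansion of the product involve a factor $A_j^{a}A_h^{b}$ with $j\ne h$ and $a,b\in\{1,2\}$, which vanishes since $A_j^{a}A_h^{b}=A_j^{a-1}(A_jA_h)A_h^{b-1}=0$. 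Nothing is missing.
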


\begin{rem}

Note that 

$$I_n + \sum_{j = 1}^s(1-\cos(\zeta_j))A_j^2 \mbox{\ \ \ \ and \ \ \ \  } \sum_{j = 1}^s \sin(\zeta_j) A_j$$

are respectively the symmetric part and the skew symmetric part of the exponential of the skew symmetric matrix $\sum_{j = 1}^s \zeta_j A_j$.
\end{rem}

\begin{rem}\label{eigenvalues_B}
From standard facts on singular values we get that, if $A = \sum_{j =1}^s \zeta_j A_j$ is a skew symmetric matrix with its Singular Value Decomposition, then
$A, A_1, \cdots , A_s$ are simultaneously diagonalizable over $\CC$, the eigenvalues of 
$A$ are $\pm \mathbf{i} \zeta_k$ both with the same multiplicity $m_k = rank(A_k)/2$, for every k = 1,...,s,  and possibly $0$ with multiplicity $n - 2\sum_{k=1}^s m_k$.

Moreover  $rank(A) = 2 \sum_{k=1}^s m_k$ and $tr(A^2) = - 2 \sum_{k=1}^s m_k \zeta_k^2 = - \sum_{l=1}^n  |\lambda_l|^2$, where $\lambda_1 , \cdots , \lambda_n$ are all the (possibly repeated) eigenvalues of $A$.
\end{rem}

\section{Real skew symmetric logarithms of special orthogonal matrices} 

\begin{notas}\label{eigenvalues_R}
In this and in the following section $R$ is a fixed matrix in $SO_n$. Since the eigenvalues of $R$ have absolute value $1$, we can  express them in this way: $e^{\pm \mathbf{i} \theta_1}$,  both with multiplicity $m_1$, $e^{\pm \mathbf{i} \theta_2}$, both with multiplicity $m_2$, until $e^{\pm \mathbf{i} \theta_p}$, both with multiplicity $m_p$, where $\theta_1, \cdots , \theta_p \in (0, \pi]$ are distinct and, in addition, possibly $1=e^0$, with multiplicity $n-2m$, where $m= m_1 + \cdots + m_p$, with the convention that, if $-1 = e^{\pm \mathbf{i} \pi}$ is an eigenvalue of $R$, then $\theta_1 = \pi$ and in this case $-1$ has multiplicity $2m_1$.
\end{notas}

\begin{defi} 
A \emph{real skew symmetric logarithm} of $R$ is any real skew symmetric matrix $B$ such that $exp(B) = R$.

A \emph{real skew symmetric principal logarithm} of $R$ is any real skew symmetric logarithm of $R$  such that its eigenvalues have absolute value in $[0, \pi]$.

We denote by $\mathcal{A}plog(R)$ the set of real skew symmetric principal logarithms of $R$. In particular $\mathcal{A}plog(I_n)$  consists only of the null matrix.
\end{defi}

\begin{prop}\label{inequality}
Assume the Notations \ref{eigenvalues_R} and suppose $R \ne I_n$.

1) $A \in {M}_n$ is a skew symmetric principal logarithm of $R \in SO_n$ if and only if 
$$A= \sum_{k=1}^p \theta_k B_k$$ 
is its Singular Value Decomposition and $exp(\sum_{k=1}^p \theta_k B_k) =R$.

2) For every skew symmetric principal logarithm $B$ of $R \in SO_n$ we have 
$$tr(B^2) = - 2 \sum_{k=1}^p m_k \theta_k^2 \  \ ,$$ 
so $tr(B^2)$ is constant on $\mathcal{A}plog(R)$.

3) If $A$ is a real skew symmetric logarithm of $R \in SO_n$, then 
$$tr(A^2) \leq - 2 \sum_{k=1}^p m_k \theta_k^2$$ 
with equality if and only if $A$ is a skew symmetric principal logarithm of $R$.
\end{prop}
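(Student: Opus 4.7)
My plan is to prove the three parts in order, using the SVD structure together with Remark \ref{eigenvalues_B} and the Rodrigues formula (Proposition \ref{RGX_exp_form}) to translate between the spectra of $A$ and of $R=\exp(A)$.

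For the forward direction of part (1), let $A$ be a skew symmetric principal logarithm of $R$ and write its SVD $A=\sum_{j=1}^{s}\zeta_{j}A_{j}$. By Remark \ref{eigenvalues_B} the nonzero eigenvalues of $A$ are $\pm\mathbf{i}\zeta_{j}$, each with multiplicity $rank(A_{j})/2$, so the principal hypothesis forces $\zeta_{j}\in(0,\pi]$. Computing the spectrum of $R=\exp(A)$: each $\zeta_{j}\in(0,\pi)$ produces a conjugate pair $e^{\pm\mathbf{i}\zeta_{j}}$ distinct from $\pm 1$, while $\zeta_{j}=\pi$ (if present) produces $-1$ with doubled multiplicity, and the zero eigenvalues of $A$ produce only $1$. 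Matching this with Notations \ref{eigenvalues_R} and using that $\zeta\mapsto e^{\mathbf{i}\zeta}$ is injective on $(0,\pi]$ gives a bijection $\zeta_{j}\leftrightarrow\theta_{k}$ that preserves multiplicities; hence $s=p$ and, after relabeling, $\zeta_{k}=\theta_{k}$ with $A_{k}=B_{k}$. The reverse direction is immediate from the definition once one notes that singular values $\theta_{k}\in(0,\pi]$ bound the moduli of the eigenvalues of $A$. Part (2) then follows at once: by part (1) and the trace formula in Remark \ref{eigenvalues_B}, $tr(B^{2})=-2\sum_{k=1}^{p}m_{k}\theta_{k}^{2}$, which depends only on $R$.

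For part (3), let $A$ be any real skew symmetric logarithm of $R$ and list its nonzero (purely imaginary) eigenvalues with multiplicities as $\pm\mathbf{i}\alpha_{1},\ldots,\pm\mathbf{i}\alpha_{q}$, so that $tr(A^{2})=-2\sum_{l=1}^{q}\alpha_{l}^{2}$. Each conjugate pair $e^{\pm\mathbf{i}\alpha_{l}}$ must appear in the spectrum of $R$: either $e^{\mathbf{i}\alpha_{l}}=1$ (so $\alpha_{l}\in 2\pi\mathbb{Z}_{>0}$, hence $\alpha_{l}^{2}\geq 4\pi^{2}$), or $\alpha_{l}\equiv\pm\theta_{k}\pmod{2\pi}$ for a unique $k$; a short case analysis using $\theta_{k}\in(0,\pi]$ shows that in the latter case $\alpha_{l}\geq\theta_{k}$ with equality iff $\alpha_{l}=\theta_{k}$. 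A multiplicity count, showing that each $\theta_{k}$ is matched by exactly $m_{k}$ such pairs, then yields $\sum_{l}\alpha_{l}^{2}\geq\sum_{k}m_{k}\theta_{k}^{2}$, whence the inequality in (3). Equality forces no $\alpha_{l}\in 2\pi\mathbb{Z}_{>0}$ and every remaining $\alpha_{l}$ exactly equal to its $\theta_{k}$, so the nonzero eigenvalues of $A$ have moduli in $(0,\pi]$ and $A\in\mathcal{A}plog(R)$ by part (1).

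The main obstacle I anticipate is the multiplicity accounting in part (3): the case $\theta_{1}=\pi$ coalesces $e^{\pm\mathbf{i}\pi}$ at $-1$ so that each corresponding pair contributes $2$ rather than $1$ to the relevant multiplicity, while the eigenvalue $1$ of $R$ may absorb contributions both from zero eigenvalues of $A$ and from pairs with $\alpha_{l}\in 2\pi\mathbb{Z}_{>0}$. A careful case distinction on whether $\theta_{k}=\pi$, together with balancing the total count $q$ against $m=\sum m_{k}$ via the multiplicity of $1$ in $R$, will be needed to make the matching rigorous.
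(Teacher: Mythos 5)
Your proposal is correct and follows essentially the same route as the paper: part (1) by matching eigenvalues of $A$ with those of $R$ under the exponential and invoking Proposition \ref{SVD}, part (2) by Remark \ref{eigenvalues_B}, and part (3) by listing the nonzero eigenvalues of $A$, reducing them modulo $2\pi$ against the $\theta_k$'s, and bounding term by term. The paper makes the inequality in (3) explicit by the algebraic expansion $(\overline\theta_j+2\pi k_j)^2=\overline\theta_j^2+4\pi k_j(\pi k_j+\overline\theta_j)$ with $k_j(\pi k_j+\overline\theta_j)\ge 0$, while you argue $\alpha_l\ge\theta_k$ by a congruence case analysis; the multiplicity-accounting issue you flag is handled implicitly in the paper by listing the eigenvalues of $A$ in the form $\pm\varphi_j\mathbf{i}$ ($j=1,\dots,m$) and $\pm 2\pi h_r\mathbf{i}$, so your concern is real but resolves the same way.
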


\begin{proof}
1) If $exp(A) = R$, the eigenvalues of $A$ are complex logarithms of the eigenvalues of $R$. Therefore $A \in \mathcal{A}plog(R)$ if and only if it is a real skew symmetric logarithm of $R$ and its eigenvalues are $\pm \mathbf{i} \theta_1$, both with multiplicity $m_1$,  $\pm \mathbf{i} \theta_2$, both with multiplicity $m_2$, until $\pm \mathbf{i} \theta_p$, both with multiplicity $m_p$, and $0$ with multiplicity $n-2m$.
This fact together with Proposition \ref{SVD} gives the statement.

2) It follows directly from  \ref{eigenvalues_B}.

3) Assume $exp(A)= R$ with $A \in \mathcal{A}_n$. We pose

$\theta_1' = \cdots = \theta_{m_1}' = \theta_1$, next  $\theta_{m_1 +1}' = \cdots = \theta_{m_1+ m_2}' = \theta_2$, until  

$\theta_{m_1 + \cdots + m_{p-1}+1}' = \cdots = \theta_m' = \theta_p$.

The eigenvalues of $A$ are necessarily of the form 

$$\pm \varphi_1 \mathbf{i}, \cdots , \pm \varphi_m \mathbf{i}, \ \ \pm 2 \pi h_1 \mathbf{i}, \cdots , \pm 2 \pi h_s \mathbf{i}, \ \  \underbrace{0, \cdots , 0}_{n-2(s+m)},$$

where  $s \le (n-2m)/2,\  \ $    $h_r \in \ZZ$ for every $r= 1, \cdots , s$.

We can assume that $\varphi_j = \overline{\theta}_j + 2 \pi k_j>0$ with $ k_j \in \ZZ$ and $\overline{\theta}_j = \pm\theta'_j$ for every $j= 1, \cdots , m$ and, if $\theta_1 = \pi$, then $\overline{\theta}_j = \pi$ for every $j= 1, \cdots , m_1$.

Standard computations show that $k_j \ge 0 $ and $\pi k_j + \overline{\theta}_j >0$  for every $j=1, \cdots , m$.

Hence, for every $j$, we have $k_j(\pi k_j + \overline{\theta}_j) \ge 0$ with equality if and only if $k_j =0$.

By \ref{eigenvalues_B} we have 

\begin{multline*}
tr(A^2) = -2 \sum_{j=1}^m (\overline{\theta}_j + 2 \pi k_j)^2 - 8 \pi^2 \sum_{j=1}^s h_j^2 =\\ 
-2 \sum_{j=1}^m \overline{\theta}_j^2 -8\pi(\sum_{j=1}^m k_j(\pi k_j + \overline{\theta}_j) + \pi \sum_{j=1}^s h_j^2) \le 
-2 \sum_{j=1}^m \overline{\theta}_j^2 = -2 \sum_{j=1}^m \theta_j'^2 = tr(B^2) ,
\end{multline*}

where $B \in \mathcal{A}plog(R)$ and the equality holds if and only if $k_j =0$ for every $j$ and $h_r=0$ for every $r$, i.e. if and only if $A \in \mathcal{A}plog(R)$.
\end{proof}

\begin{thm}\label{all_skew_symmetric_logarithms}
A real skew symmetric matrix $A$ of order $n$ is a real skew symmetric logarithm of $R \in SO_n$ if and only if there exist a skew symmetric principal logarithm $B$ of $R$, an SVD system of skew symmetric matrices $C_1, \cdots , C_s$ all commuting with $B$ , and $l_1, \cdots , l_s \in \ZZ$ such that
$$
A = B + 2\pi \sum_{j= 1}^s l_j C_j.
$$
\end{thm}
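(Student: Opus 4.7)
My plan is to prove the two implications separately. For sufficiency, assume $A = B + 2\pi \sum_{j=1}^s l_j C_j$ with $B \in \mathcal{A}plog(R)$, an SVD system $C_1, \ldots, C_s$ all commuting with $B$, and $l_j \in \ZZ$. Since $C_j C_h = 0$ for $j \ne h$, the $C_j$ pairwise commute, and each commutes with $B$ by hypothesis, so $\exp(A) = \exp(B) \exp(2\pi \sum_j l_j C_j)$. Applying the Rodrigues formula (Proposition \ref{RGX_exp_form}) to the second factor makes every $\sin(2\pi l_j)$ and $1 - \cos(2\pi l_j)$ vanish, so $\exp(A) = \exp(B) = R$.

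For necessity, start from $A \in \mathcal{A}_n$ with $\exp(A) = R$ and write its Singular Value Decomposition (Proposition \ref{SVD}) as $A = \sum_{k=1}^{\tilde{s}} \alpha_k F_k$, where the $\alpha_k > 0$ are distinct and $\{F_k\}$ is an SVD system. By Remark \ref{eigenvalues_B} the nonzero eigenvalues of $A$ are $\pm \mathbf{i} \alpha_k$, each with multiplicity $rank(F_k)/2$. The hypothesis $\exp(A) = R$ forces $e^{\mathbf{i}\alpha_k}$ to belong to the spectrum of $R$, so under Notations \ref{eigenvalues_R} (with $\theta_0 := 0$ by convention) one writes $\alpha_k = \epsilon_k \theta_{j(k)} + 2\pi l_k$ for suitable $\epsilon_k \in \{+1, -1\}$, $j(k) \in \{0, 1, \ldots, p\}$ and $l_k \in \ZZ_{\ge 0}$ (taking $l_k \ge 1$ whenever $j(k) = 0$ or $\epsilon_k = -1$, and fixing $\epsilon_k = +1$ whenever $\theta_{j(k)} = \pi$). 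This partitions the index set into $K_0 \cup \bigcup_{j \ge 1} (K_j^+ \cup K_j^-)$.

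Define $\tilde{B}_j := \sum_{k \in K_j^+} F_k - \sum_{k \in K_j^-} F_k$ for $j = 1, \ldots, p$, and $B := \sum_{j=1}^p \theta_j \tilde{B}_j$. The identities $F_k^3 = -F_k$ and $F_k F_\ell = 0$ ($k \ne \ell$) immediately give $\tilde{B}_j^3 = -\tilde{B}_j$ and $\tilde{B}_j \tilde{B}_{j'} = 0$ for $j \ne j'$, so the nonzero $\tilde{B}_j$'s form an SVD system. Applying Rodrigues to both $A$ and $B$ and using $\alpha_k \equiv \epsilon_k \theta_{j(k)} \pmod{2\pi}$, the two expansions coincide, so $\exp(B) = R$, and Proposition \ref{inequality}.1 certifies $B \in \mathcal{A}plog(R)$. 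A direct subtraction yields $A - B = 2\pi \sum_k l_k F_k$; each $F_k$ commutes with every $\tilde{B}_j$ (the products $F_k \tilde{B}_j$ and $\tilde{B}_j F_k$ are both $\pm F_k^2$ or $0$), hence with $B$. Discarding the indices with $l_k = 0$ leaves a sub-SVD-system of nonzero matrices commuting with $B$, producing the required representation.

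The principal obstacle lies in the combinatorial bookkeeping of the necessity step: one must verify that the $\tilde{B}_j$'s inherit the correct rank $2 m_j$ from the multiplicity matching forced by $\exp(A) = R$, and that the Rodrigues expansions for $\exp(A)$ and $\exp(B)$ truly reassemble into each other after the modular reduction $\alpha_k \mapsto \epsilon_k \theta_{j(k)}$. The ambiguities at $\theta_j = \pi$ (where both signs $\epsilon_k$ represent the eigenvalue $-1$) and at the indices in $K_0$ (where $\alpha_k \in 2\pi \ZZ_{>0}$ collapses onto the eigenvalue $1$) require careful convention in the grouping, but no genuinely new idea.
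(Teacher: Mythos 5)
Your proof is correct and uses essentially the same approach as the paper: take the SVD $A = \sum_k \alpha_k F_k$, reduce each $\alpha_k$ modulo $2\pi$ against the angles from Notations~\ref{eigenvalues_R} to assemble a candidate principal logarithm $B$, verify $\exp(B)=R$ by the Rodrigues formula, and observe that $A-B$ is $2\pi$ times an integer combination of the $F_k$'s, which commute with $B$. The paper packages the bookkeeping differently, absorbing the signs $\epsilon_k$ into sign-flipped matrices $C_j = \pm A_j$ and reduced angles $\tau_j \in [0,\pi]$ instead of your partition into $K_j^+ \cup K_j^-$, but these are cosmetic variants of the same construction. One point you should make explicit rather than leaving as ``the nonzero $\tilde B_j$'s'': for Proposition~\ref{inequality}.1 to apply you need $\sum_{j=1}^p \theta_j \tilde B_j$ to actually be the SVD of $B$, so every $\tilde B_j$ must be nonzero; this holds because each $e^{\pm\mathbf{i}\theta_j}$ with $\theta_j \in (0,\pi]$ is an eigenvalue of $R=\exp(A)$ distinct from $1$, hence arises from a nonzero eigenvalue $\pm\mathbf{i}\alpha_k$ of $A$, forcing $K_j \ne \emptyset$.
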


\begin{proof}
If $A = B + 2 \pi \sum_{j=1}^s l_j C_j$ as above, then by Rodrigues exponential formula

$exp(A) = exp(B) \, exp( 2 \pi \sum_{j=1}^s l_j C_j) = exp(B) = R$.

For the converse, let $A$ be any real skew symmetric logarithm of $R$ and let 

$\pm \mathbf{i} \zeta_1, \cdots, \pm \mathbf{i} \zeta_s $, $2s \le n$, $\zeta_j >0$, be the distinct nonzero eigenvalues of $A$. By Theorem \ref{SVD}, $A$ has a unique Singular Value Decomposition: $A = \sum_{j=1}^s \zeta_j A_j$. 

For every $j$, there is a unique pair $(\eta_j, k_j)$ with $\eta_j \in (0, \pi]$ and $k_j \in \NN$ such that $\zeta_j = \eta_j + k_j \pi$, so, for every $j$, we pose

\smallskip

$
(z_j, \tau_j, l_j, C_j) =
\begin{cases}
(\zeta_j, \eta_j, k_j /2 , A_j) \mbox{ if } k_j \mbox{ is even}\\
(-\zeta_j, \pi - \eta_j, (-k_j -1) / 2, -A_j) \mbox{ if } k_j \mbox{ is odd}.
\end{cases}
$

\smallskip

Hence the nonzero distinct eigenvalues of $A$ are $\pm \mathbf{i} z_1, \cdots , \pm \mathbf{i} z_s$ with $z_j = \tau_j + 2 l_j \pi$ where $\tau_j \in [0, \pi]$ and $l_j \in \ZZ$ and we get  \   \ $A =  \sum_{j=1}^s z_j C_j$.

The eigenvalues of $R$ are the exponentials of the eigenvalues of $A$, i.e. $e^{\pm \mathbf{i} \tau_1}, \cdots , e^{\pm \mathbf{i} \tau_s}$ with $\tau_1, \cdots , \tau_s \in [0, \pi]$ and possibly $1= e^0$. 

Remembering the Notations \ref{eigenvalues_R}, up to reorder the indices, we can assume that the first $m_1$, among the $\tau_j$'s, are equal to $\theta_1$, the next $m_2$ are equal to $\theta_2$, until the last  $m_p$ nonzero equal to $\theta_p$ and that the remaining $s-(m_1 + \cdots + m_p) = s-m$ are zero.

We set $B_1 = C_1 + \cdots + C_{m_1}$, $B_2 = C_{m_1 + 1} + \cdots + C_{m_1 + m_2}$, until \\$B_p = C_{m_1+ \cdots + m_{p-1} + 1} + \cdots + C_m
$. 
Then \\ 
$
A = \sum_{j =1}^m \tau_j C_j + 2 \pi \sum_{j= 1}^s l_j C_j = \sum_{j = 1}^p \theta_j B_j + 2 \pi \sum_{j= 1}^s l_j C_j
$.

To conclude, we check that $B=\sum_{j = 1}^p \theta_j B_j$ is a skew symmetric principal logarithm of $R$. 

We remark that the $B_j$'s form an SVD system, indeed so are the $C_j$'s. Furthermore $B$ commutes with any $C_j$, hence $R= exp(A) = exp(B)  exp(2 \pi \sum_{j=1}^s l_j C_j) = exp(B)$, by Rodrigues exponential formula and so $B$ is a real skew symmetric principal logarithm of $R$, by Proposition \ref{inequality} (1).
\end{proof}

\begin{cor}\label{skew_symmetric_and_principal_logarithm}
If $R \in SO_n$, then $\mathcal{A}plog(R) \ne \emptyset$.
\end{cor}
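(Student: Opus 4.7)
The plan is to obtain existence by combining the surjectivity of the exponential map $\exp\colon \mathcal{A}_n \to SO_n$ with Theorem \ref{all_skew_symmetric_logarithms}. First I would pick any real skew symmetric logarithm $A$ of $R$; then, by the converse direction of Theorem \ref{all_skew_symmetric_logarithms} (whose proof constructs an explicit principal logarithm $B$ out of any skew symmetric logarithm $A$ by shifting the nonzero singular values of $A$ into $[0,\pi]$ via the case split on the parity of $k_j$), one recovers a decomposition $A = B + 2\pi\sum_j l_j C_j$ with $B \in \mathcal{A}plog(R)$. Hence $\mathcal{A}plog(R) \neq \emptyset$.

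So the only ingredient that really needs to be supplied is the surjectivity of $\exp$ from $\mathcal{A}_n$ onto $SO_n$. I would prove this by the real canonical form of an orthogonal matrix. Namely, there exists $Q \in O_n$ with
$$
Q^T R Q = \mathrm{diag}(R_{\theta_1}, \ldots, R_{\theta_p}, I_{n-2m}),
$$
where each $R_{\theta_k} = \begin{pmatrix}\cos\theta_k & -\sin\theta_k\\ \sin\theta_k & \cos\theta_k\end{pmatrix}$ with $\theta_k \in (0,\pi]$ and $m = m_1 + \cdots + m_p$ matches the notation of \ref{eigenvalues_R}. Since $R_{\theta} = \exp(\theta E_0)$, the block matrix
$$
B := Q\,\mathrm{diag}(\theta_1 E_0,\ldots,\theta_p E_0, 0_{n-2m})\,Q^T
$$
lies in $\mathcal{A}_n$, satisfies $\exp(B) = R$, and has eigenvalues $\pm\mathbf{i}\theta_k$ (each with multiplicity $m_k$) together with $0$ with multiplicity $n-2m$. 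All of these have absolute value in $[0,\pi]$, so in fact $B \in \mathcal{A}plog(R)$ directly, giving the result without even invoking Theorem \ref{all_skew_symmetric_logarithms}.

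The only subtle point is obtaining the real canonical form above with angles restricted to $(0,\pi]$ rather than needing separate $-1$ blocks on the diagonal: this uses that $-1$ necessarily occurs with even multiplicity as an eigenvalue of $R \in SO_n$ (because $\det R = +1$ and all other real eigenvalues equal $+1$), so the $(-1)$-eigenspace of even dimension $2m_1$ can be decomposed into $m_1$ two-dimensional subspaces on each of which $R$ acts as $R_\pi = -I_2 = \exp(\pi E_0)$. Besides this parity observation, everything reduces to the standard spectral theorem for orthogonal matrices, so I do not expect any genuine obstacle.
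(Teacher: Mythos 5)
Your proposal is correct, and both routes you sketch succeed. Your first plan (take \emph{some} skew symmetric logarithm by surjectivity of $\exp$, then invoke Theorem \ref{all_skew_symmetric_logarithms} to extract a principal one) is precisely the paper's one-line proof: the paper cites \cite{BrtD1985}~IV,~2.2 for surjectivity and lets Theorem \ref{all_skew_symmetric_logarithms} do the rest. Your second, self-contained construction via the real canonical form $Q^T R Q = \operatorname{diag}(\,\text{rotation blocks},\, I_{n-2m})$ with angles pushed into $(0,\pi]$ (using that $-1$ has even multiplicity in $SO_n$) is genuinely different from the paper's proof of this corollary, but it is exactly the construction the paper itself recalls immediately afterwards in Remark \ref{explicitlogprin}, attributed to \cite{GaXu2002} Lemma~2.4 and to \cite{HoJ2013} Cor.~2.5.11(c). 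The direct construction is more elementary and avoids both the surjectivity citation and Theorem \ref{all_skew_symmetric_logarithms}; the paper's route is shorter in context because that theorem has already been established and the corollary is meant as a quick consequence of it. Two cosmetic slips in your write-up, neither of which affects correctness: your displayed block decomposition shows only one $2\times 2$ block per distinct angle $\theta_k$, whereas each should be repeated $m_k$ times (your own eigenvalue count $\pm\mathbf{i}\theta_k$ ``with multiplicity $m_k$'' shows you intended this); and with the paper's $E_0 = \left(\begin{smallmatrix}0 & 1\\ -1 & 0\end{smallmatrix}\right)$ one has $\exp(\theta E_0) = \left(\begin{smallmatrix}\cos\theta & \sin\theta\\ -\sin\theta & \cos\theta\end{smallmatrix}\right)$, which differs from your $R_\theta$ by the sign of $\theta$; this is a harmless convention mismatch.
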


\begin{proof}
It follows from the previous Theorem \ref{all_skew_symmetric_logarithms} via the existence of real skew symmetric logarithms of $R$ (see for instance \cite{BrtD1985} IV, 2.2).
\end{proof}

\begin{rem}\label{explicitlogprin}
The existence of a real skew symmetric principal logarithm of $R$ is constructively obtained in \cite{GaXu2002} Lemma 2.4. We resume here that construction.

First of all we observe that, for every $\theta \in \RR$, $exp(\theta E_0) = 
\left(
\begin{array}{rr}
\cos \theta & \sin \theta  \\ 
-\sin \theta & \cos \theta  
\end{array}
\right)
$.

As in \cite{HoJ2013} Cor.2.5.11 (c), there is a real orthogonal matrix $K$ such that

$R= K diag(\underbrace{exp(\theta_1 E_0), \cdots , exp(\theta_1 E_0)}_{m_1}, \cdots , \underbrace{exp(\theta_p E_0), \cdots , exp(\theta_p E_0)}_{m_p}, \underbrace{1, \cdots ,1}_{n-2m} )K^T$

\medskip
$=exp(K diag(\underbrace{\theta_1 E_0, \cdots , \theta_1 E_0}_{m_1}, \cdots , \underbrace{\theta_p E_0, \cdots , \theta_p E_0}_{m_p}, \underbrace{0, \cdots ,0}_{n-2m} )K^T )$

\medskip

$=exp(\sum_{j=1}^p \theta_j B_j),$\      \ where $B_1= K diag(\underbrace{E_0, \cdots , E_0}_{m_1}, \underbrace{0, \cdots, 0}_{n-2m_1})K^T,$\\

\medskip
$B_2=K diag(\underbrace{0,\cdots,0}_{2m_1},\underbrace{E_0, \cdots , E_0}_{m_2},\underbrace{0, \cdots, 0}_{n-2(m_1+m_2)})K^T$,

\medskip
until $B_p = K diag(\underbrace{0, \cdots , 0,}_{2(m_1 + \cdots + m_{p-1})} \underbrace{E_0, \cdots , E_0}_{m_p}, \underbrace{0, \cdots , 0}_{n-2m})K^T$.

\medskip

Then $B= \sum_{j=1}^p \theta_j B_j$ is the real skew symmetric principal logarithm constructed in \cite{GaXu2002} with its Singular Value Decomposition.

Also its uniqueness, exactly when $-1$ is not an eigenvalue of $R$, is proved in \cite{GaXu2002} Thm. 4.1. Precisely the following result holds
\end{rem}

\begin{prop}\label{system_R}
Let $B= \theta_1 B_1 + \cdots + \theta_p B_p$ with $\theta_1, \cdots , \theta_p \in (0, \pi]$ be any real skew symmetric principal logarithm of $R \in SO_n$ together with its Singular Value Decomposition. 

1) If $-1$ is not an eigenvalue of $R$, then $B_1, B_2, \cdots, B_p$ are uniquely determined  by $R$, so there is a unique real skew symmetric principal logarithm of $R$.

2) If $-1$ is an eigenvalue of $R$ (i.e. if $\theta_1=\pi$), then $B_2, \cdots, B_p$ are uniquely determined by $R$ and moreover also $B_1^2$ is uniquely determined by $R$ because we have

$$B_1^2 = \frac{1}{4} (R+R^T) -\frac{1}{2} I_n -\frac{1}{2} \sum_{j=2}^p (1-\cos\theta_j)B_j^2.$$ 
\end{prop}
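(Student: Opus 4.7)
The plan is to feed $B = \sum_{j=1}^p \theta_j B_j$ into the Rodrigues exponential formula (Proposition \ref{RGX_exp_form}), obtaining
$$
R \;=\; I_n \;+\; \sum_{j=1}^p \sin(\theta_j)\,B_j \;+\; \sum_{j=1}^p (1-\cos\theta_j)\,B_j^2,
$$
and then to separate the symmetric and skew symmetric parts. Since $B_j^T = -B_j$, the matrix $B_j^2$ is symmetric, so the two parts of $R - I_n$ give the independent identities
$$
\tfrac{1}{2}(R+R^T) - I_n \;=\; \sum_{j=1}^p (1-\cos\theta_j)\,B_j^2,
\qquad
\tfrac{1}{2}(R-R^T) \;=\; \sum_{j=1}^p \sin(\theta_j)\,B_j.
$$

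The key algebraic observation is that the SVD system relations $B_j^3 = -B_j$ and $B_j B_k = 0$ ($j\neq k$) force the symmetric matrices $P_j := -B_j^2$ to be pairwise annihilating orthogonal projections: a short computation yields $P_j^T = P_j$, $P_j^2 = B_j^4 = -B_j^2 = P_j$, $P_jP_k = 0$ for $j\neq k$, and moreover $P_j B_j = -B_j^3 = B_j$ and $P_j B_k = 0$ for $k\neq j$. Rewriting the symmetric identity as
$\tfrac{1}{2}(R+R^T) - I_n = -\sum_j (1-\cos\theta_j)\,P_j$
then presents the left-hand side in its spectral decomposition, with eigenvalues $\cos\theta_j - 1$ supported on the mutually orthogonal ranges of the $P_j$'s. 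Because the $\theta_j$'s are distinct in $(0,\pi]$, these eigenvalues are distinct and nonzero, so the projectors $P_j$ — and therefore each matrix $B_j^2 = -P_j$ — are uniquely determined by $R$.

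For part (1), all $\theta_j$ lie in $(0,\pi)$, hence $\sin\theta_j \neq 0$. Left-multiplying the skew symmetric identity by $P_j$ and using $P_j B_k = \delta_{jk} B_j$ gives $B_j = (\sin\theta_j)^{-1}\,P_j\!\left(\tfrac{1}{2}(R-R^T)\right)$, whence each $B_j$ is determined by $R$. For part (2), the same argument applied for $j = 2,\ldots,p$ (where $\theta_j \in (0,\pi)$) yields uniqueness of $B_2,\ldots,B_p$; uniqueness of $B_1^2$ is a special case of the spectral argument above. The explicit formula follows from isolating the $j = 1$ summand in the symmetric identity and using $1-\cos\pi = 2$:
$$
2B_1^2 \;=\; \Bigl(\tfrac{1}{2}(R+R^T) - I_n\Bigr) - \sum_{j=2}^p (1-\cos\theta_j)\,B_j^2,
$$
which, divided by $2$, is the stated identity.

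The main conceptual point — and the only place a subtlety could hide — is the identification of the symmetric part equation with a genuine spectral decomposition: it requires both the pairwise distinctness of the $\theta_j$'s in $(0,\pi]$ (so that the eigenvalues $\cos\theta_j - 1$ are distinct and nonzero) and the \emph{orthogonality} of the projectors $P_j$ with mutually orthogonal ranges. Both of these come directly from the SVD system hypothesis on the $B_j$'s, so once the Rodrigues formula and the projector identities for $P_j$ are in place, the rest of the argument is essentially formal.
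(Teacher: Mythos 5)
Your proof is correct and, unlike the paper (whose entire proof of this proposition is ``See \cite{GaXu2002} Thm.~4.1 and its proof''), it is self-contained. The route you take is the natural one: separating the Rodrigues formula into its symmetric and skew parts, observing that the SVD-system relations make the matrices $P_j := -B_j^2$ pairwise-annihilating orthogonal projections, and then reading off the symmetric identity as a spectral decomposition of $\tfrac{1}{2}(R+R^T)-I_n$ with pairwise-distinct nonzero eigenvalues $\cos\theta_j - 1$ (distinct because $\cos$ is injective on $(0,\pi]$). That pins down each $P_j$ by $R$, and the multiplication by $P_j$ in the skew identity then pins down each $B_j$ whenever $\sin\theta_j\neq 0$, giving both parts of the statement and the explicit formula for $B_1^2$. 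The only thing worth flagging, because you use it tacitly, is that the exponents $\theta_1,\dots,\theta_p$ occurring in the SVD of \emph{any} element of $\mathcal{A}plog(R)$ are themselves uniquely determined by $R$; this is exactly Proposition~\ref{inequality}(1) together with the Notations~\ref{eigenvalues_R}, and it is what justifies treating the eigenvalues $\cos\theta_j - 1$ as data attached to $R$ rather than to the chosen logarithm $B$. With that one clause added, the argument is complete.
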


\begin{proof}
See \cite{GaXu2002} Thm. 4.1 and its proof.
\end{proof}

\section{Differential-geometric properties of $\mathcal{A}plog(R)$}

\begin{rem}\label{OSSMatrices} 
In order to characterize $\mathcal{A}plog(R)$ with $R \in SO_n$, we must study $\mathcal{M}_{2\mu}$: the  compact subset of $\mathcal{A}_{2\mu}$ of skew symmetric orthogonal matrices of order $2 \mu$.

The Lie group $O_{2\mu}$ acts on $\mathcal{A}_{2\mu}$ on the left as $\phi(Q, M) = Q M Q^T$ and $\mathcal{M}_{2\mu}$ is the orbit of the matrix $diag(\underbrace{E_0, \cdots , E_0}_{\mu})$. The isotropy group of $diag(\underbrace{E_0, \cdots , E_0}_{\mu})$ is $Sp_{2\mu} \cap O_{2\mu}$, where $Sp_{2\mu} = Sp_{2\mu}(\RR)$ is the symplectic group of real matrices of order $2\mu$. It is a standard fact that this intersection is isomorphic to the unitary group $U_\mu$ of complex matrices of order $\mu$, then  $\mathcal{M}_{2\mu}$ is a compact submanifold of $\mathcal{A}_{2\mu}$ diffeomorphic to $O_{2\mu} / U_\mu$ (see for instance \cite{Popov1991} p.1). Its dimension is equal to $dim(O_{2\mu})-dim(U_{\mu})={\mu}({\mu}-1).$
\end{rem}

\begin{prop}\label{class_with_Pfaff}
With the same notations as in Remark \ref{OSSMatrices}, the manifold $\mathcal{M}_{2\mu}$  is compact of dimension $\mu(\mu-1)$ and its connected components are $\mathcal{M}_{2\mu}^+$ and $\mathcal{M}_{2\mu}^-$: the sets of real skew symmetric orthogonal matrices of order $2\mu$ with Pfaffian equal to $1$ and $-1$ respectively. 
These components are simply connected manifolds both diffeomorphic to the homogeneous space $SO_{2 \mu}/ U_\mu$.
In particular if $\mu=1$, each of them is a single point and if $\mu=2$, they are both diffeomorphic to a $2$-sphere.
\end{prop}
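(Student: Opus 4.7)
The plan is to address the four claims of the proposition in turn: compactness and dimension, the component decomposition via the Pfaffian, the diffeomorphism with $SO_{2\mu}/U_\mu$ together with simple connectedness, and the low-dimensional identifications.

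Compactness and the dimension $\mu(\mu-1)$ come for free from Remark \ref{OSSMatrices}: $\mathcal{M}_{2\mu}$ is a single $O_{2\mu}$-orbit with stabilizer isomorphic to $U_\mu$, so $\dim\mathcal{M}_{2\mu}=\dim O_{2\mu}-\dim U_\mu=\mu(2\mu-1)-\mu^2=\mu(\mu-1)$. For the component decomposition I start from the remark that any $M\in\mathcal{M}_{2\mu}$ satisfies $M^T=-M$ and $M^TM=I$, hence $M^2=-I$ and $\det(M)^2=1$; combined with the classical identity $\det(M)=\mathrm{Pf}(M)^2\ge 0$ this forces $\det(M)=1$ and $\mathrm{Pf}(M)=\pm 1$. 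Since the Pfaffian is continuous on $\mathcal{M}_{2\mu}$ and takes only the values $\pm 1$, the subsets $\mathcal{M}_{2\mu}^\pm$ are both clopen, hence unions of connected components.

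To show that each is in fact a single component and diffeomorphic to $SO_{2\mu}/U_\mu$, I invoke the transformation rule $\mathrm{Pf}(QMQ^T)=\det(Q)\,\mathrm{Pf}(M)$. With $J_\mu:=\mathrm{diag}(E_0,\ldots,E_0)$ one has $\mathrm{Pf}(J_\mu)=1$, so $Q\mapsto QJ_\mu Q^T$ sends $SO_{2\mu}$ onto $\mathcal{M}_{2\mu}^+$ and $O_{2\mu}^-$ onto $\mathcal{M}_{2\mu}^-$ (transitivity of $O_{2\mu}$ on $\mathcal{M}_{2\mu}$ being already recorded in Remark \ref{OSSMatrices}). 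The stabilizer $Sp_{2\mu}\cap O_{2\mu}\cong U_\mu$ lies entirely inside $SO_{2\mu}$, because in its standard block realisation a unitary matrix $A+\mathbf{i}B\in U_\mu$ becomes a real matrix of determinant $|\det_{\CC}(A+\mathbf{i}B)|^2=1$. Therefore $\mathcal{M}_{2\mu}^+\cong SO_{2\mu}/U_\mu$ as a connected smooth manifold, and the diffeomorphism $M\mapsto Q_0 MQ_0^T$ for any fixed $Q_0\in O_{2\mu}^-$ transports this identification to $\mathcal{M}_{2\mu}^-$. Simple connectedness follows from the homotopy exact sequence of the fibration $U_\mu\to SO_{2\mu}\to SO_{2\mu}/U_\mu$: the loop $\theta\mapsto\mathrm{diag}(e^{\mathbf{i}\theta},1,\ldots,1)$ generates $\pi_1(U_\mu)=\ZZ$ and, via the block embedding into $SO_{2\mu}$, represents a $2\pi$-rotation in the first $E_0$-plane, which is the nontrivial class of $\pi_1(SO_{2\mu})=\ZZ/2$ for $\mu\ge 2$; for $\mu=1$ the quotient $SO_2/U_1$ is already a point.

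Specializing, $\mu=1$ yields single points (consistent with $\mathcal{M}_2=\{\pm E_0\}$ from Recalls \ref{initial_recalls}), while for $\mu=2$ each component is a compact simply connected surface of dimension $\mu(\mu-1)=2$, hence diffeomorphic to $S^2$ by the classification of surfaces. I expect the surjectivity of $\pi_1(U_\mu)\to\pi_1(SO_{2\mu})$ in the homotopy sequence to be the main subtle point, since it depends on the explicit way the complex structure $J_\mu$ sits inside $SO_{2\mu}$; using the block-diagonal form above makes this computation essentially by inspection.
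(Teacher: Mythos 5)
Your argument is correct and follows the same structural skeleton as the paper's proof (transitive congruence action of $O_{2\mu}$, the Pfaffian sign splitting $\mathcal{M}_{2\mu}$ into two clopen sets, identification of each with $SO_{2\mu}/U_\mu$), but the final steps diverge in a genuine way. Where the paper simply cites Bott's computation of the stable homotopy groups of $SO_{2\mu}/U_\mu$ for the simple connectedness claim, and cites Bott or Pearl for the identification $\mathcal{M}_4^{\pm}\simeq S^2$, you run the homotopy exact sequence of the fibration $U_\mu\to SO_{2\mu}\to SO_{2\mu}/U_\mu$ and verify by an explicit block computation that the generator of $\pi_1(U_\mu)\cong\ZZ$ maps onto the generator of $\pi_1(SO_{2\mu})\cong\ZZ/2$, then invoke the classification of compact simply connected surfaces for $\mu=2$. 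This is more elementary and self-contained, and the surjectivity check is transparent; the trade-off is that it only gives $\pi_1$, whereas the paper's appeal to Bott is reused later (Corollary \ref{Cor_Characterize_plog}) to read off all the stable homotopy groups, so citing Bott here is not wasted effort in the context of the paper. Two further small differences are worth noting: you add the explicit observation that $Sp_{2\mu}\cap O_{2\mu}\cong U_\mu$ actually sits inside $SO_{2\mu}$ (via $\det_{\RR}=|\det_{\CC}|^2$), which the paper uses implicitly when it asserts $\mathcal{M}_{2\mu}^+\simeq SO_{2\mu}/U_\mu$; and you derive $\mathrm{Pf}(M)=\pm1$ from $M^2=-I_{2\mu}$ together with $\det=\mathrm{Pf}^2$, whereas the paper gets it directly from $\mathrm{Pf}(QJ_\mu Q^T)=\det(Q)$ — both are fine, and yours has the minor virtue of not presupposing the orbit description.
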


\begin{proof}
We refer for instance to \cite{Onish1991} for the \emph{Pfaffian}, $\mathcal{P}f(M)$, of a skew symmetric matrix $M$.

Every real skew symmetric orthogonal matrix of order $2\mu$  can be written as \\
$Q \, diag(\underbrace{E_0, \cdots , E_0}_{\mu}) \, Q^T$ with $Q \in O_{2\mu}$.

Now $\mathcal{P}f(diag(\underbrace{E_0, \cdots , E_0}_{\mu}))=1$, so
  
$\mathcal{P}f(Q \, diag(\underbrace{E_0, \cdots , E_0}_{\mu}) \, Q^T) = det(Q) \, \mathcal{P}f(diag(\underbrace{E_0, \cdots , E_0}_{\mu})) = det(Q) = \pm 1$.

Hence the map $(Q,E) \mapsto Q E Q^T$ defines a transitive action of $SO_{2\mu}$ on $\mathcal{M}_{2\mu}^+$ (which is diffeomorphic to $\mathcal{M}_{2\mu}^-$ by means of a congruence with an orthogonal matrix of determinant $-1$). This implies that $\mathcal{M}_{2\mu}^+ \simeq SO_{2\mu}/U_\mu$, hence $\mathcal{M}_{2\mu}^+$ is connected, being $SO_{2\mu}$ connected. From the continuity of the Pfaffian map, $\mathcal{M}_{2\mu}^+$ and $\mathcal{M}_{2\mu}^-$ are the connected components of $\mathcal{M}_{2\mu}$.

The case $\mu = 1$ is trivial and already noted in Recalls \ref{initial_recalls}.

It is known that Bott has computed the homotopy groups of order $k \le 2\mu -2$ of such quotients (see \cite{Bott1959} and also \cite{Massey1961}). In particular the fundamental group is trivial and so $\mathcal{M}_{2\mu}^+$ is simply connected. Moreover, if $\mu =2$, $\mathcal{M}_{4}^+$ is homeomorphic (therefore diffeomorphic) to a $2$-sphere; this case can be also deduced as a consequence of \cite{Pearl1959} Thm.1.
\end{proof}

\begin{thm}\label{Characterize_plog}
Assume the same notations and conventions introduced in \ref{eigenvalues_R}.

1) If $-1$ is not an eigenvalue of $R \in SO_n$, then $R$ has a unique skew symmetric principal logarithm.

2) If $-1$ is an eigenvalue of $R \in SO_n$ (i.e. if $\theta_1=\pi$)  with multiplicity $2m_1 \geq 2$ and $B= \sum_{j=1}^p \theta_j B_j$ is any real skew symmetric principal logarithm of $R$ together with its Singular Value Decomposition, then $\mathcal{A}plog(R)$ is the embedded submanifold of $\mathcal{A}_n$ diffeomorphic to $\mathcal{M}_{2m_1}$ defined by

\begin{multline*} 
\mathcal{A}plog(R) =
 \{  \pi W + \theta_2 B_2 + \cdots + \theta_p B_p \ /  \\
 W \in \mathcal{A}_n ; W, B_2, \cdots , B_p \mbox{\  form an SVD system} \mbox{\ and \ } W^2 = B_1^2 \}.
\end{multline*}
\end{thm}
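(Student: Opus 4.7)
My plan is as follows. Part (1) follows at once from Proposition~\ref{system_R}(1), so the real work lies in Part (2). I would first establish the displayed set equality and then upgrade it to the stated diffeomorphism with $\mathcal{M}_{2m_1}$.

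For the inclusion $\subseteq$ I would take an arbitrary $A \in \mathcal{A}plog(R)$ and, by Proposition~\ref{inequality}(1), write its Singular Value Decomposition $A = \sum_{k=1}^p \theta_k B'_k$ with the same coefficients $\theta_k$ as those of $B$ (both governed by Notations~\ref{eigenvalues_R}). Proposition~\ref{system_R}(2) then forces $B'_j = B_j$ for every $j \geq 2$ and $(B'_1)^2 = B_1^2$, so that $W := B'_1$ meets the required conditions. For the converse inclusion I would set $A = \pi W + \sum_{j=2}^p \theta_j B_j$ and apply Rodrigues' formula (Proposition~\ref{RGX_exp_form}) to the SVD system $W, B_2, \ldots, B_p$: since $\sin \pi = 0$ and $1 - \cos \pi = 2$, the $W$-contribution collapses to $2W^2 = 2B_1^2$, which is exactly the $B_1$-contribution in Rodrigues' formula for $\exp(B) = R$, the remaining terms coinciding verbatim. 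Hence $\exp(A) = R$, and since the eigenvalues of $A$ are $\pm \mathbf{i}\pi$ (with multiplicity $m_1$) and $\pm \mathbf{i}\theta_j$ with $\theta_j \in (0,\pi)$ for $j \geq 2$ (Remark~\ref{eigenvalues_B}), all of absolute value in $[0,\pi]$, we conclude $A \in \mathcal{A}plog(R)$.

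For the manifold structure I would introduce
\[
\Omega := \{W \in \mathcal{A}_n : W, B_2, \ldots, B_p \text{ form an SVD system and } W^2 = B_1^2\},
\]
so that $\mathcal{A}plog(R) = \{\pi W + \sum_{j \geq 2} \theta_j B_j : W \in \Omega\}$ is an affine image of $\Omega$. Setting $V_1 := \mathrm{Im}(B_1)$, which has dimension $2m_1$, skew symmetry of $W$ gives $\ker W = \ker W^2 = \ker B_1^2 = V_1^\perp$, so $W$ vanishes on $V_1^\perp$ and carries $V_1$ to itself; combined with $B_1^2|_{V_1} = -I_{V_1}$ (immediate from $B_1^3 = -B_1$), this forces $W|_{V_1}$ to be a skew symmetric orthogonal automorphism of $V_1$. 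In any orthonormal basis of $\RR^n$ adapted to the splitting $V_1 \oplus V_1^\perp$, $W$ assumes the block form $\mathrm{diag}(\tilde W, 0)$ with $\tilde W \in \mathcal{M}_{2m_1}$; the assignment $W \mapsto \tilde W$ is then the restriction of a linear isomorphism, hence a diffeomorphism $\Omega \to \mathcal{M}_{2m_1}$. Composing with the affine shift $W \mapsto \pi W + \sum_{j \geq 2} \theta_j B_j$ produces the announced embedded submanifold of $\mathcal{A}_n$ diffeomorphic to $\mathcal{M}_{2m_1}$.

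The hard part I expect is precisely the simplification argument in the last paragraph: checking that the three SVD conditions on $W$ (namely $W^3 = -W$ and $WB_j = B_jW = 0$ for $j \geq 2$) all follow from $W \in \mathcal{A}_n$ together with the single quadratic equation $W^2 = B_1^2$. This reduces to the two observations that $\mathrm{Im}(B_j) \subseteq V_1^\perp$ and $V_1 \subseteq \ker(B_j)$ for $j \geq 2$ (each a consequence of the SVD identity $B_1 B_j = B_j B_1 = 0$ for the $B_j$'s), together with $W^2|_{V_1} = -I_{V_1}$ which yields $W^3 = -W$. Once this reduction is secured, the rest is routine coordinate bookkeeping against the structure already exhibited in Proposition~\ref{class_with_Pfaff}.
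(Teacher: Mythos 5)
Your proof is correct and follows essentially the same route as the paper: part (1) invokes Proposition~\ref{system_R}(1), the set equality is obtained from Proposition~\ref{system_R}(2) together with Rodrigues' formula, and the manifold identification comes from forcing $W$ into the block form $\mathrm{diag}(\tilde W,0)$ with $\tilde W$ a skew-symmetric orthogonal matrix of order $2m_1$. The only difference is stylistic --- the paper carries out the block reduction by an explicit commutation computation with the conjugating orthogonal matrix $K$ from Remark~\ref{explicitlogprin}, while you reach the same conclusion coordinate-freely via the invariant subspace $V_1=\mathrm{Im}(B_1)$, and you correctly note in passing that the SVD-system conditions on $W$ in the displayed set are in fact already implied by $W\in\mathcal{A}_n$ and $W^2=B_1^2$.
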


\begin{proof}
Part (1) is Proposition \ref{system_R} (1).

When $-1$ is an eigenvalue of $R$, by Proposition \ref{system_R} (2), $B_1^2, B_2, \cdots , B_p$ are uniquely determined by $R$.

From this observation and from Proposition \ref{inequality} we get that every real skew symmetric principal logarithm of $R$ has the form 
$\pi W + \theta_2 B_2 + \cdots + \theta_p B_p$ 
where $W \in \mathcal{A}_n$ and $W, B_2, \cdots , B_p$ is an SVD system with $W^2 = B_1^2$.

We remark that if a matrix has the form $\pi W + \theta_2 B_2 + \cdots + \theta_p B_p$, with $W$ as above, then it is a real skew symmetric principal logarithm of $R$. 

Indeed 
\begin{multline*}
exp(\pi W + \theta_2 B_2 + \cdots + \theta_p B_p) = I_n + \sum_{j=2}^p[\sin\theta_j B_j + (1-\cos\theta_j)B_j^2] + 2 W^2 =\\
I_n + \sum_{j=2}^p[\sin\theta_j B_j + (1-\cos\theta_j)B_j^2] + 2 B_1^2 = R.
\end{multline*}

Now let $W$ be a skew symmetric matrix with $W^2 = B_1^2$ and such that  $W, B_2, \cdots , B_p$ form an SVD system.

As in \cite{GaXu2002} \S 2 (and as in Remark \ref{explicitlogprin}) we can write
$
W^2 =
B_1^2 = [K \ 
diag( \underbrace{
E_0,
\cdots ,
E_0
}_{m_1},
\underbrace{0, \cdots , 0}_{n-2m_1}
)
\ K^T]^2 =
-  K \ diag(I_{2m_1}, \underbrace{0, \cdots , 0}_{n-2m_1}) \ K^T 
$ with $K \in O_n$, so $W^2$ has $-1$ as eigenvalue of multiplicity $2 m_1$ and $0$ as eigenvalue of multiplicity $n - 2m_1$. Hence the eigenvalues of the real matrix $W$ must be $\pm \mathbf{i}$ both with multiplicity $m_1$ and $0$ with multiplicity $n-2m_1$. 

Since $W$ is skew symmetric, there exists a real orthogonal matrix $Q$ such that
$$W = Q \, diag( \underbrace{
E_0,
\cdots ,
E_0
}_{m_1},
\underbrace{0, \cdots , 0}_{n-2m_1}) \, Q^T$$
(see \cite{HoJ2013} Cor. 2.5.11 (b)).

From $W^2 = B_1^2$, we get $K^T Q \ diag(I_{2m_1}, \underbrace{0, \cdots , 0}_{n-2m_1}) = diag(I_{2m_1}, \underbrace{0, \cdots , 0}_{n-2m_1}) \ K^TQ $.

Therefore $K^TQ$ is an orthogonal matrix which commutes with the block diagonal matrix $diag(I_{2m_1}, \underbrace{0, \cdots , 0}_{n-2m_1})$. This is equivalent to say that
$K^TQ$ is block diagonal too, say: $diag(H_{2m_1}, H_{n - 2m_1})$. But $K^TQ$ is orthogonal, so the blocks must be orthogonal matrices too of orders $2m_1$ and $n-2m_1$ respectively. Hence
\begin{multline*}W = K \ diag(H_{2m_1}, H_{n - 2 m_1}) \, diag( \underbrace{
E_0,
\cdots ,
E_0
}_{m_1},
\underbrace{0, \cdots , 0}_{n-2m_1}) \,   \ diag(H_{2m_1}^T, H_{n - 2m_1}^T) \ K^T \\
=K \ diag(H_{2m_1} \ 
diag(
\underbrace{
E_0,
\cdots ,
E_0
}_{m_1}) 
\ H_{2m_1}^T, 
\underbrace{0, \cdots , 0}_{n-2m_1}) \
K^T.
\end{multline*}

Now (again by \cite{HoJ2013} Cor. 2.5.11 (b)) 
$
H_{2m_1} \ 
diag(
\underbrace{
E_0,
\cdots ,
E_0
}_{m_1}) 
\ H_{2m_1}^T
$ is the form of a generic real skew symmetric orthogonal matrix of order $2m_1$, hence $W$ can be written as $K   
diag(M, \underbrace{0, \cdots , 0}_{n-2m_1}) 
K^T
$,
where $M$ is a skew symmetric orthogonal matrix of order $2m_1$.

Vice versa if
$W =
K   
diag(M, \underbrace{0, \cdots , 0}_{n-2m_1}) 
K^T
$, 
with $M$ as before, it is easy to check that $W^3 = -W$, $WB_j = B_j W =0$ for any $j = 2, \cdots , p$, $W^2 = B_1^2$ and $W \in \mathcal{A}_n$. 

Moreover 

$R= I_n + \sum_{j=2}^p[\sin\theta_jB_j + (1-\cos\theta_j)B_j^2)] + 2W^2 =
exp(\pi W + \theta_2 B_2 + \cdots + \theta_p B_p)
$. 

This implies that $\psi: \mathcal{M}_{2m_1} \to \mathcal{A}plog(R) \subset \mathcal{A}_n$, defined by 

$$\psi(M) = \pi K diag(M, \underbrace{0, \cdots , 0}_{n-2m_1})K^T + \sum_{h=2}^p \theta_h B_h$$

is a bijection. We can conclude, because $\psi$ is clearly a $C^\infty$-embedding of $\mathcal{M}_{2m_1}$ into $\mathcal{A}_n$.
\end{proof}

\begin{cor}\label{Cor_Characterize_plog}
Assume that $-1$ has multiplicity $2\mu$ as eigenvalue of $R \in SO_n$ ($\mu=0$ means that $-1$ is not eigenvalue).

If $\mu=0$, $\mathcal{A}plog(R)$ consists of a single point.

If $\mu=1$, $\mathcal{A}plog(R)$ consists of two distinct points.

If $\mu=2$, $\mathcal{A}plog(R)$ is diffeomorphic to the disjoint union of two $2$-spheres.

If $\mu \geq 3$, $\mathcal{A}plog(R)$ has two mutually diffeomorphic connected components each of them has the following homotopy groups of order $k \le 2\mu -2$:

$\pi_k \simeq 
\begin{cases}
0 \mbox{ for } k \equiv 1,3,4,5 \mbox{\ \ \ \  (mod 8)}\\
\ZZ \mbox{ for } k \equiv 2,6 \mbox{\ \ \ \ \ \ \ \ \  (mod 8)}\\
\ZZ_2 \mbox{ for } k \equiv 0,7 \mbox{\ \ \ \ \ \ \ \  (mod 8).}
\end{cases}
$

\end{cor}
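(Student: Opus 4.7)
The plan is to split on the value of $\mu$ and in each case reduce the claim to a result already established in the paper, with only the $\mu \ge 3$ case requiring an external citation. The overall idea is: the hard work has been done in Theorem \ref{Characterize_plog} and Proposition \ref{class_with_Pfaff}, so this corollary should be essentially a translation.

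For $\mu=0$, the assertion is simply Theorem \ref{Characterize_plog}(1). For $\mu \ge 1$, I would invoke Theorem \ref{Characterize_plog}(2), which gives an explicit $C^\infty$-diffeomorphism $\psi\colon \mathcal{M}_{2\mu}\to \mathcal{A}plog(R)$, thereby transferring the entire topological/differential-geometric question to the manifold $\mathcal{M}_{2\mu}$ of skew symmetric orthogonal matrices of order $2\mu$. From here, Proposition \ref{class_with_Pfaff} does all the structural work: $\mathcal{M}_{2\mu}$ has exactly two connected components $\mathcal{M}_{2\mu}^{\pm}$ (distinguished by the Pfaffian), each diffeomorphic to the homogeneous space $SO_{2\mu}/U_\mu$. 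The components are mutually diffeomorphic via conjugation by an element of $O_{2\mu}$ with determinant $-1$, as noted in the proof of that proposition.

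The small cases $\mu=1$ and $\mu=2$ are then immediate: by Proposition \ref{class_with_Pfaff} each component is a single point (when $\mu=1$) or diffeomorphic to a $2$-sphere (when $\mu=2$), and transporting through $\psi$ we get the first three listed conclusions. For $\mu \ge 3$, the components are each diffeomorphic to $SO_{2\mu}/U_\mu$, and one reads the homotopy groups off Bott's table for this series of homogeneous spaces (already referenced in the proof of Proposition \ref{class_with_Pfaff}): in the stable range $k \le 2\mu - 2$, the groups $\pi_k(SO_{2\mu}/U_\mu)$ realize the $8$-periodic pattern $\mathbb{Z}_2,0,\mathbb{Z},0,0,0,\mathbb{Z},\mathbb{Z}_2$ which matches the table in the statement.

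The only potential obstacle is purely bibliographical, namely matching Bott's indexing conventions for $\pi_k(SO/U)$ against the mod-$8$ residues in the stated table; this is a verification, not a new computation. No separate argument is needed to see that the two components have the same homotopy type, since they are already diffeomorphic. Hence the whole proof reduces to a case split plus a pointer to \cite{Bott1959}.
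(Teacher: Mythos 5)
Your proposal follows exactly the paper's argument: case split on $\mu$, apply Theorem \ref{Characterize_plog} to reduce to the structure of $\mathcal{M}_{2\mu}$, then read off Proposition \ref{class_with_Pfaff} (two components diffeomorphic to $SO_{2\mu}/U_\mu$, with the explicit $\mu=1,2$ identifications) and cite \cite{Bott1959} for the stable homotopy groups in the range $k \le 2\mu-2$. This is the same proof as the paper's, and the mod-$8$ pattern you quote agrees with the table in the statement.
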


\begin{proof}
It follows from Theorem \ref{Characterize_plog} and from Proposition \ref{class_with_Pfaff}, while the above homotopy groups of $SO_{2\mu}/U_\mu$, said to be \emph{stable}, are computed in \cite{Bott1959} (see also \cite{Massey1961}).
\end{proof}

\begin{rem}
In the statement of the previous Corollary we have mentioned only the stable homotopy groups of $SO_{2\mu}/U_\mu$. Some other results, about \emph{unstable} homotopy groups (i.e. of order $k \ge 2\mu-1$), could be stated, by considering further studies since \cite{Massey1961} and \cite{Harris1963}.
\end{rem}

\begin{rem}\label{partskewsymmlogREM}
We conclude this section by giving an explicit representation of the set of all real skew symmetric logarithms in some particular, but relevant, cases.

From now on and until the end of this Section, $R$ is a fixed matrix in $SO_n$ such that all the following conditions hold:

- its nonreal complex eigenvalues are all of multiplicity $1$;

- either $-1$ is not an eigenvalue or it has multiplicity $2$;

- either $1$ is not an eigenvalue or it has multiplicity at most $2$.

These are equivalent to  $m=p$ and $n-2p \le 2$ (remember Notations \ref{eigenvalues_R}). Note that $m=p$ is equivalent to $m_1= \cdots = m_p = 1$.

Let $B = \sum_{j=1}^p \theta_j B_j$ be the real skew symmetric principal logarithm of $R$, with its Singular Value Decomposition, whose construction is recalled in Remark \ref{explicitlogprin}. Remember that  there exists a matrix $K \in O_n$ such that $B_j = Kdiag(\underbrace{0, \cdots , 0}_{2j-2}, E_0, \underbrace{0, \cdots , 0}_{n-2j})K^T$, $1 \le j \le p$.

When $1$ is an eigenvalue of $R$ with multiplicity $2$ (i.e. if $n=2p+2$), we pose $\theta_{p+1}=0$, $B_{p+1} = Kdiag(\underbrace{0, \cdots , 0}_{2p}, E_0)K^T$. 

Hence, if $q= \lfloor n/2 \rfloor$ (the integer part of $n/2$), in any case we can write \\ $B = \sum_{j=1}^q \theta_j B_j$ \  \ .

If $\theta_1 \ne \pi$ the previous one is the unique real skew symmetric principal logarithm of $R$, while  if $\theta_1 = \pi$, the only two real skew symmetric principal logarithms of $R$ are 
$\pi B_1 + \sum_{j=2}^q \theta_j B_j$ and $-\pi B_1 + \sum_{j=2}^q \theta_j B_j$ (remember Theorem \ref{Characterize_plog} and Corollary \ref{Cor_Characterize_plog}).

Finally for $F_j = diag(\underbrace{0, \cdots , 0}_{2j-2}, E_0, \underbrace{0, \cdots , 0}_{n-2j})$, we have $B_j = K F_j K^T$ for $1 \le j \le q$.
\end{rem}

\begin{lemma}\label{partskewsymmlogLEMMA}
With the same notations as in Remark \ref{partskewsymmlogREM}, assume $q =\lfloor n/2 \rfloor$ and let $B_0 \in \mathcal{A}_{n}$ defined by $B_0 = \sum_{j=1}^q \psi_j F_j$, where the $\psi_j$'s are real numbers such that $\psi_j^2 \ne \psi_h^2$ as soon as $j \ne h$ when $n$ is even, and  $\psi_j^2 \ne \psi_h^2$ as soon as $j \ne h$ and $\psi_j \ne 0$ for every $j$ when $n$ is odd.

Then $C_0 \in \mathcal{A}_n$ commutes with $B_0$ if and only if $C_0 = \sum_{j=1}^q \beta_j F_j$ for some $\beta_1, \cdots , \beta_q \in \RR$.
\end{lemma}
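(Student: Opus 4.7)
The plan is to analyze the commutation relation block-by-block, exploiting the explicit block-diagonal structure of $B_0$. Note that $F_j$ is a block-diagonal matrix with $E_0$ in the $j$-th $2 \times 2$ block and zeros elsewhere, so $B_0$ is block-diagonal with diagonal blocks $\psi_1 E_0, \ldots, \psi_q E_0$, followed by a trailing $1 \times 1$ zero block when $n$ is odd. One direction is immediate: since $F_i F_j = 0$ for $i \ne j$ and $F_j^2 = -(\text{block-identity})$, any linear combination $C_0 = \sum \beta_j F_j$ obviously commutes with $B_0 = \sum \psi_j F_j$.

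For the converse, I would partition $C_0$ conformably with $B_0$ into blocks $C_{ij}$ (of sizes $2 \times 2$, or $2 \times 1$ / $1 \times 2$ / $1 \times 1$ involving the last row and column when $n$ is odd). The equation $B_0 C_0 = C_0 B_0$ then reads, for $1 \le i, j \le q$, as $\psi_i E_0 C_{ij} = \psi_j C_{ij} E_0$, and, in the odd case, $\psi_i E_0 C_{i,q+1} = 0$ and $0 = C_{q+1,j} \psi_j E_0$ for the interactions with the last index. In the odd case the hypothesis $\psi_j \ne 0$ together with the invertibility of $E_0$ forces $C_{i,q+1} = 0$ and $C_{q+1,j} = 0$; skew-symmetry of $C_0$ kills the final $1 \times 1$ entry as well.

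The heart of the argument is the $2 \times 2$ block equation $\psi_i E_0 C_{ij} = \psi_j C_{ij} E_0$. On the diagonal ($i = j$), either $\psi_i = 0$ (which by hypothesis occurs for at most one index) and there is no constraint beyond skew-symmetry, forcing $C_{ii} = \beta_i E_0$; or $\psi_i \ne 0$ and $C_{ii}$ commutes with $E_0$, so $C_{ii} \in \mathrm{span}\{I_2, E_0\}$, and skew-symmetry again forces $C_{ii} = \beta_i E_0$. Off the diagonal ($i \ne j$), if one of $\psi_i, \psi_j$ is zero the other is nonzero and invertibility of $E_0$ immediately gives $C_{ij} = 0$; if both are nonzero, set $\lambda = \psi_j/\psi_i$ and observe that $E_0 C_{ij} E_0 = \lambda C_{ij} E_0^2 = -\lambda C_{ij}$; iterating the same identity and using $E_0^2 = -I_2$ yields $C_{ij} = \lambda^2 C_{ij}$, and the hypothesis $\psi_i^2 \ne \psi_j^2$ means $\lambda^2 \ne 1$, hence $C_{ij} = 0$.

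Collecting these pieces, $C_0 = \sum_{j=1}^q \beta_j F_j$, as required. The only delicate point — and what the distinct-squares hypothesis is arranged to handle — is the off-diagonal case where both scaling factors are nonzero: without the inequality $\psi_i^2 \ne \psi_j^2$ one could have nontrivial matrices $C_{ij}$ intertwining distinct blocks, so this is the step that genuinely uses the hypothesis rather than being purely formal.
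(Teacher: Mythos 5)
Your proof is correct and follows essentially the same approach as the paper: decompose $C_0$ conformably with $B_0$ into $2 \times 2$ blocks (plus the boundary row/column when $n$ is odd), show the off-diagonal blocks vanish using $\psi_i^2 \ne \psi_j^2$, and use skew-symmetry to force the diagonal blocks to be multiples of $E_0$. The only difference is a stylistic one: you kill the off-diagonal blocks via the matrix-level identity $E_0 C_{ij} E_0 = -\lambda C_{ij}$, iterated to yield $(1-\lambda^2)C_{ij}=0$, whereas the paper writes out the four scalar entries of each block and derives the same conclusion entry-wise.
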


\begin{proof}
One implication is trivial.

Assume first that $n = 2q$. We write the matrix $C_0$ as a block matrix with $2 \times 2$ blocks:
$C_0 = (\Lambda_{ij})_{1 \le i,j \le q}$. Then the condition $B_0C_0 = C_0 B_0$ is equivalent to
$$
\psi_j E_0 \Lambda_{jh} = \psi_h \Lambda_{jh} E_0 \mbox{\ \ for every\ \ } j, h = 1, \cdots q. 
$$

Fixed $j,h$, we denote $\Lambda_{jh}= 
\left(
\begin{array}{rr}
a & b  \\ 
c & d  
\end{array}
\right)
$, so the previous conditions give 

$$
\psi_j c= -\psi_h b, \ \ -\psi_j b = \psi_h c ,\ \ \psi_j a = \psi_h d, \ \ \psi_j d = \psi_h a. 
$$

and therefore

$$
(\psi_h^2 -\psi_j^2)ad=0, \ \ (\psi_h^2 - \psi_j^2) bc =0.
$$

Hence, for $j \ne h$, we have $ad=bc=0$ and so easy computations give $\Lambda_{jh} =0$ as soon as $j \ne h$.  Therefore $C_0 = diag(\Lambda_{11}, \cdots , \Lambda_{qq})$. But $C_0$ is skew symmetric, 

so $\Lambda_{jj} = \beta_j E_0$ and we can conclude for $n$ even.

When $n= 2q+1$ we pose

$$
C_0 =
\left(
\begin{array}{cccc}
\Lambda_{11} & \cdots & \Lambda_{1q} & \Lambda_{1, q+1} \\
\Lambda_{21} & \cdots & \Lambda_{2q} & \Lambda_{2, q+1} \\ 
\vdots  & \vdots & \vdots & \vdots  \\
\Lambda_{q1} & \cdots & \Lambda_{qq} & \Lambda_{q, q+1}  \\
\Lambda_{q+1, 1}  & \cdots & \Lambda_{q+1, q} & \lambda  \\
\end{array}
\right)
$$

where, for every $j, h = 1, \cdots , q$, \ $\Lambda_{j, h} \in M_2$, while $\Lambda_{j,q+1}$ and $\Lambda_{q+1,h}$ are respectively column and row $2$-vectors and $\lambda \in \RR$.

From $B_0 C_0 = C_0 B_0$, we get $\Lambda_{jh} =0$, for every $1 \le j \ne h \le q$, as above, and moreover, by looking at the entries $(j, q+1)$ and $(q+1, h)$, we get also that $\Lambda_{j, q+1}$ and $\Lambda_{q+1, h}$ are both zero for every $j, h =1, \cdots , q$, taking into account that all $\psi_h$'s are nonzero. 
Since $C_0$ is skew symmetric, we have $\Lambda_{jj} = \beta_{j} E_0$, for every $j$ and $\lambda =0$. This concludes the proof.
\end{proof}

\begin{thm}\label{partskewsymmlogTHM}
With the same notations as in \ref{eigenvalues_R} and in \ref{partskewsymmlogREM}, let $R \in SO_n$ and suppose that all the following conditions on $R$ hold:

- its nonreal complex eigenvalues are all of multiplicity $1$;

- either $-1$ is not an eigenvalue or it has multiplicity $2$;

- either $1$ is not an eigenvalue or it has multiplicity at most $2$.

Fix $B=\sum_{h=1}^{\lfloor n/2 \rfloor}  {\theta_h} B_h$ $\in$ $\mathcal{A}plog(R)$ (as in \ref{partskewsymmlogREM}).

If $-1$ is not an eigenvalue of $R$ (i.e. $\theta_1 \ne \pi$), then $\mathcal{A}plog(R) = \{ B \}$, while if $-1$ is an eigenvalue of $R$ (i.e. $\theta_1 = \pi$), then $\mathcal{A}plog(R)$ consists of $B = \pi B_1 + \sum_{h=2}^{\lfloor n/2 \rfloor}  {\theta_h} B_h$ and $- \pi B_1 + \sum_{h=2}^{\lfloor n/2 \rfloor}  {\theta_h} B_h$.

Moreover $A \in \mathcal{A}_n$ is a real skew symmetric logarithm of $R$ if and only if 

$$A= \sum_{h=1}^{\lfloor n/2 \rfloor} (\theta_h+ 2 \pi r_h) B_h = B + 2 \pi \sum_{h=1}^{\lfloor n/2 \rfloor}  r_h B_h$$
for some $r_1, \cdots , r_{\lfloor n/2 \rfloor} \in \ZZ$.

 In particular the set of all real skew symmetric logarithms of $R$  is a discrete lattice of rank $\lfloor n/2 \rfloor$ in $\mathcal{A}_n$.
\end{thm}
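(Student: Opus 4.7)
The plan is to first dispose of the statement about $\mathcal{A}plog(R)$, which is essentially a special case of what has already been proved, and then to use Theorem \ref{all_skew_symmetric_logarithms}, Lemma \ref{partskewsymmlogLEMMA} and Rodrigues' formula to identify the full set of real skew symmetric logarithms with an affine lattice.

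Under the hypotheses we have $m_1 = \cdots = m_p = 1$. If $\theta_1 < \pi$, Theorem \ref{Characterize_plog} (1) gives uniqueness of the skew symmetric principal logarithm. If $\theta_1 = \pi$, Theorem \ref{Characterize_plog} (2) identifies $\mathcal{A}plog(R)$ with $\mathcal{M}_2 = \{E_0, -E_0\}$, and the explicit parametrization $\psi$ produced in that proof yields exactly the two matrices $\pm \pi B_1 + \sum_{h \geq 2} \theta_h B_h$; this has essentially already been observed in Remark \ref{partskewsymmlogREM}.

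For the description of all real skew symmetric logarithms, I would fix $B = \sum_h \theta_h B_h$ as in Remark \ref{partskewsymmlogREM} and let $A$ be an arbitrary real skew symmetric logarithm of $R$. By Theorem \ref{all_skew_symmetric_logarithms} write $A = B + 2\pi \sum_{j=1}^s l_j C_j$, with $C_1, \ldots, C_s$ an SVD system commuting with $B$ and $l_j \in \ZZ$. Setting $C := \sum_j l_j C_j \in \mathcal{A}_n$, one has $CB = BC$. Writing $B = K(\sum_h \theta_h F_h) K^T$ and $B_h = K F_h K^T$, and augmenting (in the case $n = 2p+2$) by $\theta_{p+1} = 0$ and $B_{p+1} = K F_{p+1} K^T$, the matrix $K^T C K$ commutes with $B_0 := \sum_h \theta_h F_h$. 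Under our hypotheses the $\theta_h^2$ are pairwise distinct, and when $n$ is odd all $\theta_h$ are nonzero; hence Lemma \ref{partskewsymmlogLEMMA} applies and produces $\beta_1, \ldots, \beta_q \in \RR$ with $K^T C K = \sum_h \beta_h F_h$, equivalently $A = \sum_{h=1}^q (\theta_h + 2\pi \beta_h) B_h$.

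To force $\beta_h \in \ZZ$, I would apply Rodrigues' exponential formula (Proposition \ref{RGX_exp_form}) to the SVD system $B_1, \ldots, B_q$:
$$R = \exp(A) = I_n + \sum_{h=1}^q \bigl[\sin(\theta_h + 2\pi\beta_h) B_h + (1 - \cos(\theta_h + 2\pi\beta_h)) B_h^2 \bigr]$$
and compare with the analogous expansion of $\exp(B) = R$. The $B_h$ are skew symmetric with pairwise disjoint supports, so the $2q$ matrices $\{B_h, B_h^2\}_{h=1}^q$ are linearly independent in $M_n$; matching coefficients forces $\sin(\theta_h + 2\pi\beta_h) = \sin\theta_h$ and $\cos(\theta_h + 2\pi\beta_h) = \cos\theta_h$, whence $\beta_h \in \ZZ$. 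The reverse inclusion is immediate from the same Rodrigues computation. Consequently the set of all real skew symmetric logarithms of $R$ is $\{B + 2\pi \sum_{h=1}^q r_h B_h : r_1, \ldots, r_q \in \ZZ\}$, a discrete lattice of rank $q = \lfloor n/2 \rfloor$ in $\mathcal{A}_n$ by the linear independence of the $B_h$. I expect the main obstacle to be the bookkeeping that sets up Lemma \ref{partskewsymmlogLEMMA} uniformly across the parity cases $n = 2p$, $n = 2p+1$, $n = 2p+2$ — in particular the introduction of the auxiliary $B_{p+1}$ in the last case — so that the distinct-squares and (for odd $n$) nonvanishing hypotheses are met without ambiguity; once that is settled, the integrality conclusion is a short linear independence check.
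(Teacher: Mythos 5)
Your argument is correct and follows the paper's broad strategy---invoking Theorem \ref{all_skew_symmetric_logarithms} and then Lemma \ref{partskewsymmlogLEMMA}---but it departs from the paper in the step that forces integrality. The paper applies Lemma \ref{partskewsymmlogLEMMA} to each $C_j$ separately, obtaining $C_j = \sum_h \beta_{j,h} B_h$, and then exploits the SVD-system relation $C_j^3 = -C_j$ together with $B_h^3 = -B_h$, $B_hB_k=0$ for $h\ne k$, to deduce $\beta_{j,h}^3 = \beta_{j,h}$, hence $\beta_{j,h}\in\{-1,0,1\}$; the integer coefficients then arise as $\overline r_h = \sum_j l_j\beta_{j,h}\in\ZZ$. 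You instead aggregate $C:=\sum_j l_jC_j$, apply the lemma once to $C$, and close by expanding $\exp(A)$ and $\exp(B)$ via Rodrigues' formula and using the linear independence of $\{B_h, B_h^2\}_{h=1}^q$ to force $\sin(\theta_h+2\pi\beta_h)=\sin\theta_h$ and $\cos(\theta_h+2\pi\beta_h)=\cos\theta_h$, i.e.\ $\beta_h\in\ZZ$. This is cleaner in that it avoids the $\{-1,0,1\}$ classification, at the modest cost of the linear-independence check. One small inaccuracy to repair: Theorem \ref{all_skew_symmetric_logarithms} guarantees only that $A=\overline B + 2\pi\sum l_jC_j$ for \emph{some} $\overline B\in\mathcal{A}plog(R)$, not for the fixed $B$; when $\theta_1=\pi$ this $\overline B$ might be $-\pi B_1 + \sum_{h\ge 2}\theta_hB_h$, and the $C_j$ are then only known to commute with $\overline B$, not with $B$. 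This is harmless for your route: Lemma \ref{partskewsymmlogLEMMA} applies equally well with $B_0 = \sum_h\overline\theta_hF_h$ (since $\overline\theta_h^2=\theta_h^2$), producing $A=\sum_h(\overline\theta_h+2\pi\beta_h)B_h$, and your Rodrigues comparison with $\exp(B)=R$ still pins the coefficients to $\theta_h\pmod{2\pi}$ and hence yields the desired form $A=\sum_h(\theta_h+2\pi r_h)B_h$ with $r_h\in\ZZ$. The paper handles exactly this point by carrying $\overline B$ through and shifting $r_1$ by one at the end when $\overline\theta_1=-\pi$.
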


\begin{proof}

The first sentence follows directly from Remark \ref{partskewsymmlogREM}.

One implication of the equivalence is trivial by Rodrigues exponential formula.

For the other implication assume that $A$ is a real skew symmetric logarithm of $R$. Then, by Theorem \ref{all_skew_symmetric_logarithms}, \  \ $A= \overline{B} + \sum_{j=1}^s 2 \pi l_j C_j$ where

 $ \overline{B} \in \mathcal
{A}plog(R)$,\  \ $C_1, \cdots C_s$ form an SVD system of skew symmetric matrices all commuting with $\overline{B}$ and $l_1, \cdots , l_s \in \ZZ$.

According to Remark \ref{partskewsymmlogREM}, we can express the matrix $\overline{B}$ as $\overline{B}= \sum_{h=1}^q \overline{\theta}_h B_h$, where $q= \lfloor n/2 \rfloor$, $\overline{\theta}_h =\theta_h$ for every $h \geq 2$ and $\overline{\theta}_1 = \theta_1$ or $\overline{\theta}_1 = -\pi$.

Standard computations show that, fixed $j = 1, \cdots , s$, the condition $C_j \overline{B}= \overline{B} C_j$ is equivalent to $K^T C_j K ( \sum_{h = 1}^q \overline{\theta}_h F_h) = ( \sum_{h = 1}^q \overline{\theta}_h F_h) K^T C_j K$. Hence, by Lemma \ref{partskewsymmlogLEMMA}, we get $K^T C_j K = \sum_{h=1}^q \beta_{j,h} F_h$, so $C_j= \sum_{h=1}^q \beta_{j,h} B_h$, where $\beta_{j,h}\in \RR$. Since $C_j^3 = -C_j$, we get $\beta_{j,h} \in\{ -1,0,1 \}$ for every $j,h$.

Hence we have $A = \sum_{h=1}^q \overline{\theta}_h B_h + \sum_{h=1}^q (\sum_{j=1}^s 2 \pi l_j \beta_{j,h}) B_h = \sum_{h=1}^q (\overline{\theta}_h +2 \pi \overline{r}_h)B_h$, where $\overline{r}_h= \sum_{j=1}^s l_j \beta_{j,h} \in \ZZ$. If we pose $r_h = \overline{r}_h$ for $h \ge 2$, $r_1 = \overline{r}_1$ when $\overline{\theta}_1 \ne - \pi$ and $r_1 = \overline{r}_1 -1$ when $\overline{\theta}_1 = - \pi$, we obtain the formula $A= \sum_{h=1}^q (\theta_h+ 2 \pi r_h) B_h$, \,  for some $r_1, \cdots , r_q \in \ZZ$.
\end{proof}

\begin{rem}\label{REMpartskewsymmlogTHM}
When $n=3$, the hypotheses of the previous Theorem are satisfied by every $R \in SO_3 \setminus \{I_3 \}$. This implies that the set of real skew symmetric logarithms of  every such $R$ is countable, discrete and closed in $\mathcal{A}_3$. 

It is also possible to prove that the set of real skew symmetric logarithms of $I_3$ is a countable disjoint union of submanifolds of $\mathcal{A}_3$, all diffeomorphic to $2$-spheres, and of the point consisting in the null matrix. We propose to investigate this subject in the context of a more general study about logarithms of matrices.
\end{rem}

\section{Geodesics on the Riemannian manifold $O_n(\RR)$}

\begin{recalls}\label{trace_metric}

We can consider on $GL_n$ the classical Riemannian metric (independent of the base point), called \emph{Frobenius metric}:
$$
g(V, W) = tr(V^T W).
$$

On the other hand in \cite{DoPe2015} we have considered the semi-Riemannian metric on $GL_n$, given by
$$\overline{g}_G(V,W) = tr(G^{-1}V G^{-1}W)$$

which turns out to be bi-invariant on $GL_n$.

This last metric induces a structure of symmetric semi-Riemannian manifold on $GL_n$. In particular we have described its \emph{Levi-Civita connection} $\nabla$, its \emph{curvature tensors} and characterized its \emph{geodesics} as the curves of the type: 
$$\alpha(t)=Kexp(tC)$$ 
for any $C \in M_n$ and any $K \in GL_n$ (\cite{DoPe2015} Thm. 2.1).

The metric $\overline{g}$ is an important object of study on the submanifold of $GL_n$ of real positive definite matrices, where it defines  a Riemannian structure (see for instance \cite{Lan1999} Chapt.\,XII, \cite{BhaH2006} \S2, \cite{Bha2007} Chapt.\,6, \cite{MoZ2011} \S3).

The metric $\overline{g}$ defines a structure of \emph{anti-Riemannian manifold} on $O_n$ (i.e. it is negative definite at every point of $O_n$), where it is bi-invariant too.

We denote by $g$ and by $\overline{g}$ also the restrictions of the above metrics to the submanifold $O_n$.

Now  if $G \in O_n$, we have: 

$T_G(O_n) = \{ V \in M_n / G^T V = G^{-1} V \mbox{\ is a skew symmetric matrix} \}$ and so, for every $G \in O_n$ and every $V, W \in T_G(O_n)$, we have: 

$\overline{g}_G(V, W) = tr(G^{-1} V G^{-1}W) = -tr((G^TV)^T G^T W) = -tr(V^T W) = -g(V, W)$.

This allows to state the relation between the two metrics on $O_n$:
\end{recalls}

\begin{lemma}\label{g=-g-segnato}
On $O_n$ we have $g = - \overline{g}$, so $(O_n, g)$ and $(O_n, \overline{g})$ have the same geodesics and also $g$ is a bi-invariant metric on $O_n$.
\end{lemma}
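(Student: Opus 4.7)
The first assertion $g=-\overline{g}$ on $O_n$ has in fact already been carried out in Recalls \ref{trace_metric}: for $G\in O_n$ we have $G^{-1}=G^T$, and for $V\in T_G(O_n)$ the matrix $G^TV$ is skew symmetric, so
\[
\overline{g}_G(V,W)=tr(G^{-1}VG^{-1}W)=tr(G^TV\cdot G^TW)=-tr((G^TV)^TG^TW)=-tr(V^TW)=-g(V,W).
\]
My plan is simply to record this identity as the starting point of the proof.

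For the claim that $(O_n,g)$ and $(O_n,\overline{g})$ have the same geodesics, I would invoke the general fact that multiplying a (pseudo-)Riemannian metric by a nonzero real scalar does not alter its Levi-Civita connection: indeed, Koszul's formula
\[
2\overline{g}(\nabla_XY,Z)=X\overline{g}(Y,Z)+Y\overline{g}(X,Z)-Z\overline{g}(X,Y)+\overline{g}([X,Y],Z)-\overline{g}([X,Z],Y)-\overline{g}([Y,Z],X)
\]
is homogeneous of degree one in the metric on both sides, so rescaling by $-1$ preserves $\nabla$. Since the geodesic equation $\nabla_{\dot\alpha}\dot\alpha=0$ depends only on $\nabla$, the two metrics share their geodesics.

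For the bi-invariance of $g$ on $O_n$: the metric $\overline{g}$ is bi-invariant on $GL_n$ (as recalled from \cite{DoPe2015}), hence in particular its restriction to the Lie subgroup $O_n$ is bi-invariant there. Since $g=-\overline{g}$ on $O_n$ and the property of being invariant under left/right translations is preserved under multiplication by a nonzero constant, $g$ is bi-invariant on $O_n$ as well.

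There is no substantial obstacle here; the lemma is essentially a bookkeeping statement that packages the pointwise computation of Recalls \ref{trace_metric} together with two elementary general principles (invariance of the Levi-Civita connection under constant rescaling, and preservation of bi-invariance under the same). The only point requiring mild care is to notice that the sign change from $\overline{g}$ to $g$ converts an anti-Riemannian structure into a Riemannian one on $O_n$, which is exactly why it is worthwhile to work with $g$ in the sequel while still exploiting all the geodesic and symmetry information already developed for $\overline{g}$.
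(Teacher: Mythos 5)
Your proposal is correct and is essentially the same as the paper's: the paper gives no separate proof of this lemma, treating it as an immediate consequence of the pointwise identity $\overline{g}_G(V,W)=-g(V,W)$ computed in Recalls~\ref{trace_metric}, exactly as you record. Your additional remarks (constant rescaling preserves the Levi-Civita connection and hence the geodesics, and bi-invariance passes to the Lie subgroup $O_n$ and survives the sign change) merely make explicit the standard facts the authors leave implicit.
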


\begin{prop}\label{conseguenze_bi-invarianza}
a) $(O_n, g)$ is a globally symmetric Riemannian manifold with non-negative sectional curvature.

b) The geodesic curves of $(O_n, g)$ are precisely the curves of type $P(t) = Gexp(tA)$ with $G \in O_n$ and $A \in \mathcal{A}_n$.

c) $(O_n, g)$ is an Einstein manifold with Ricci tensor $Ric =  \dfrac{n-2}{4} g$. 
Moreover for the scalar curvature $S$ we have $S =   \dfrac{(n-2)(n-1)n}{8}$ .
\end{prop}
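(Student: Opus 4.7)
The plan is to invoke the standard theory of Lie groups equipped with bi-invariant Riemannian metrics. By Lemma \ref{g=-g-segnato}, $g = -\overline{g}$ on $O_n$, and since $\overline{g}$ is bi-invariant so is $g$; thus $(O_n, g)$ is a compact Lie group with a bi-invariant Riemannian metric, and all three parts reduce to classical computations in this setting, specialized to $\mathfrak{so}(n) = \mathcal{A}_n$.

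For part (b), I would quote the classical fact that on a Lie group with bi-invariant metric the Levi-Civita connection on left-invariant vector fields is $\nabla_X Y = \frac{1}{2}[X,Y]$, so that the geodesics through the identity are precisely the one-parameter subgroups $t \mapsto \exp(tA)$. Since left-translation by any $G \in O_n$ is an isometry, every geodesic takes the form $P(t) = G\exp(tA)$ with $A \in \mathcal{A}_n$.

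For part (a), the involutions $\sigma_G : H \mapsto GH^{-1}G$ are isometries (by bi-invariance) fixing $G$ and inducing $-\mathrm{Id}$ on $T_G O_n$, which exhibits $(O_n,g)$ as globally symmetric. The non-negativity of the sectional curvature follows from the standard identity $R(X,Y)Z = -\frac{1}{4}[[X,Y],Z]$ for left-invariant fields (a direct consequence of $\nabla_X Y = \frac{1}{2}[X,Y]$ and the Jacobi identity); combined with skew-adjointness of $\mathrm{ad}_X$ with respect to $g$ (another reformulation of bi-invariance), it yields $g(R(X,Y)Y, X) = \frac{1}{4}\|[X,Y]\|^2 \geq 0$.

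For part (c), I would use the curvature formula and skew-adjointness of $\mathrm{ad}$ to derive, for a $g$-orthonormal basis $\{E_i\}$ of $\mathcal{A}_n$,
\[
\mathrm{Ric}(X,Y) = \sum_i g(R(E_i,X)Y, E_i) = \tfrac{1}{4}\sum_i g([E_i, X], [E_i, Y]) = -\tfrac{1}{4} B(X,Y),
\]
where $B$ denotes the Killing form of $\mathfrak{so}(n)$. The classical identity $B(X,Y) = (n-2)\,\mathrm{tr}(XY)$ on $\mathfrak{so}(n)$, together with $g(X,Y) = \mathrm{tr}(X^T Y) = -\mathrm{tr}(XY)$ for skew symmetric $X,Y$, then gives $B = -(n-2)g$, hence $\mathrm{Ric} = \frac{n-2}{4}g$. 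Tracing yields $S = \frac{n-2}{4}\dim O_n = \frac{(n-2)(n-1)n}{8}$. The only potential snag is verifying the Killing form identity for $\mathfrak{so}(n)$; this is classical and can be cited or checked directly on the standard basis $\{e_i e_j^T - e_j e_i^T\}_{i<j}$, so no real obstacle arises.
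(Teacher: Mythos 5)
Your proof is correct and follows essentially the same route as the paper's: all three parts are reduced to the standard differential geometry of a compact Lie group with bi-invariant metric, with the Ricci tensor computed via $\mathrm{Ric}=-\tfrac{1}{4}B$ and the classical Killing-form identity $B(X,Y)=(n-2)\,\mathrm{tr}(XY)$ on $\mathfrak{so}(n)$. The only difference is presentational: the paper cites Milnor, Spivak and Sepanski for the facts you spell out explicitly (the formula $\nabla_X Y=\tfrac12[X,Y]$, the curvature identity, the symmetric-space involutions, and the Ricci/Killing relation).
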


\begin{proof}
Assertions (a) and (b) are standard consequences of the bi-invariance of the metric $g$ on $O_n$ (see for instance \cite{Miln1976} \S 7 and \cite{Spiv1979} pp.544--545).

For (c), again by the bi-invariance, we have: $Ric(X,Y) = - B(X,Y)/4$ where $B$ is the Cartan-Killing form of $O_n$ (see for instance \cite{Miln1976} p. 324). Now it is known that, if $X, Y \in \mathcal{A}_n$, then $B(X,Y) = (n-2) tr(XY) = - (n-2) tr(X^T Y)$ (see for instance \cite{Sepa2007} Chapt.\,6, \S 2), so  $B(X,Y) = - (n-2)g(X,Y)$. Therefore $Ric(X, Y) = (n-2) g(X,Y) /4$.
Finally  denoted by $S$ the scalar curvature of $(O_n, g)$, we get that $S=  (n-2)dim(O_n) /4 =   (n-2)(n-1)n/8$.
\end{proof}

\begin{rem}\label{rivestimento_universale}
It is known that a $3$-dimensional Einstein manifold has constant sectional curvature equal to $S/6$ (see for instance \cite{KoNo1963} p.293 Prop.2). Then, from Proposition \ref{conseguenze_bi-invarianza} (c), we get that $(O_3,g)$ has constant sectional curvature $1/8$, hence $(SO_3, g/8)$ is a connected Riemannian manifold with constant sectional curvature equal to $1$ and therefore it is isometric to $\PP^3(\RR)$ with the Fubini-Study metric (see \cite{KoNo1963} Chapt. VI, Thm. 7.10).

\medskip

The following Proposition gives some properties of $O_n$, viewed as submanifold of the semi-Riemannian manifold $(GL_n, \overline{g})$.
\end{rem}
 
\begin{prop}\label{foliations}
For every symmetric positive definite matrix $P$ let 

$\mathcal{L}_P = \{ X \in GL_n \ / \  XX^T = P \} $ and $\mathcal{R}_P = \{ X \in GL_n \ / \  X^TX = P \}$. 

Then 
$
GL_n = \cup_P \mathcal{L}_P =  \cup_P \mathcal{R}_P 
$

(where the unions are taken in the set of all symmetric positive definite matrices) are two foliations of $GL_n$, whose leaves are  totally geodesic anti-Riemannian submanifolds of $(GL_n, \overline{g})$, all isometric to $(O_n, \overline{g})$.
\end{prop}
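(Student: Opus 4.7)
The plan is to identify each $\mathcal{L}_P$ (and each $\mathcal{R}_P$) as a translate of $O_n$ inside $GL_n$, and then deduce the asserted differential-geometric properties from the bi-invariance of $\overline{g}$ and from the explicit description of its geodesics recalled in \ref{trace_metric}. I treat $\{\mathcal{L}_P\}$ in detail; the family $\{\mathcal{R}_P\}$ is handled symmetrically by interchanging left and right translations throughout.

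Fix a symmetric positive definite $P$. Since $P = P^{1/2}(P^{1/2})^T$, the set $\mathcal{L}_P$ is nonempty. For any $X_0 \in \mathcal{L}_P$, the condition $XX^T = X_0 X_0^T$ is equivalent to $(X_0^{-1}X)(X_0^{-1}X)^T = I_n$, so $\mathcal{L}_P = X_0 \cdot O_n$ is the image of $O_n$ under the left translation $L_{X_0}(Y) = X_0 Y$. To see that $\{\mathcal{L}_P\}$ is a foliation, I would check that the smooth map $\phi : GL_n \to \mathrm{Sym}^+_n$ sending $X$ to $XX^T$ (where $\mathrm{Sym}^+_n$ denotes the open cone of symmetric positive definite matrices inside the space of symmetric matrices) is a submersion: its differential $d\phi_X(V) = VX^T + XV^T$ is surjective onto symmetric matrices, since $V = \tfrac12 S (X^{-1})^T$ yields $d\phi_X(V) = S$ for every symmetric $S$. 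The fibers of $\phi$ are precisely the sets $\mathcal{L}_P$, so they partition $GL_n$ as the leaves of a foliation.

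Because $\overline{g}$ is bi-invariant on $GL_n$, $L_{X_0}$ is an isometry of $(GL_n, \overline{g})$, and its restriction to $O_n$ is a diffeomorphism onto $\mathcal{L}_P$ that is at the same time an isometry; in particular each leaf is anti-Riemannian and isometric to $(O_n, \overline{g})$. For the totally geodesic property, a short computation identifies $T_X \mathcal{L}_P = \{V \in M_n : VX^T + XV^T = 0\}$, and writing $V = XZ$ (possible because $X$ is invertible) this condition becomes $Z + Z^T = 0$, so $T_X \mathcal{L}_P = \{XZ : Z \in \mathcal{A}_n\}$. By Theorem 2.1 of \cite{DoPe2015}, the geodesic of $(GL_n, \overline{g})$ starting at $X$ with velocity $XZ$ is $\alpha(t) = X \exp(tZ)$; since $Z$ is skew symmetric, $\exp(tZ) \in O_n$ for every $t$, so $\alpha(t) \in X \cdot O_n = \mathcal{L}_P$ throughout. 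I do not expect any serious obstacle: the only slightly delicate step is the algebraic identification of $T_X \mathcal{L}_P$ with $\{XZ : Z \in \mathcal{A}_n\}$, and from there everything follows cleanly from bi-invariance and from the known form of the geodesics of $(GL_n, \overline{g})$.
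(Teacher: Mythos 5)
Your proof is correct and follows essentially the same route as the paper: both identify $\mathcal{L}_P$ as $L_{X_0}(O_n)$, use bi-invariance of $\overline{g}$ to make $L_{X_0}$ an isometry, and use the explicit geodesic formula $\alpha(t) = X\exp(tZ)$ with $Z$ skew symmetric to get the totally geodesic property. The only minor variations: the paper first proves $(O_n,\overline{g})$ itself is totally geodesic and then transfers this to $\mathcal{L}_P$ via the isometry, while you compute $T_X\mathcal{L}_P$ directly; the paper obtains the $\mathcal{R}_P$ foliation from the $\mathcal{L}_P$ one via the transposition isometry of $(GL_n,\overline{g})$, while you invoke right translations symmetrically; and you make explicit the submersion argument (surjectivity of $d\phi_X$) underlying the foliation claim, which the paper leaves implicit. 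All of these are equivalent and your additions are sound.
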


\begin{proof}
First of all we prove that $(O_n, \overline{g})$ is a totally geodesic anti-Riemannian submanifolds of $(GL_n, \overline{g})$.

Indeed let $G \in O_n$ and $V \in T_G (O_n)$. The geodesic on $(GL_n, \overline{g})$ through $G$ with velocity $V$ is $\alpha(t) = G exp(tG^{-1}V)$. But $tG^{-1}V \in \mathcal{A}_n$ for every $t$, hence $\alpha(t) \in O_n$ for every $t$ and this is equivalent to say that $(O_n, \overline{g})$ is totally geodesic in $(GL_n, \overline{g})$ (see for instance \cite{Lan1999} Chapt.XIV Cor.1.4).

For every fixed symmetric positive definite matrix $P$, we have $\mathcal{L}_P \ne \emptyset$. 
Hence if $Q \in \mathcal{L}_P$, then standard computations show that the left translation $L_Q$ is an isometry of $(GL_n, \overline{g})$ onto itself, mapping $O_n$ onto $\mathcal{L}_P$.

Since $(O_n, \overline{g})$ is a totally geodesic submanifold of $(GL_n, \overline{g})$ of dimension 

$n(n-1)/2$, 
it follows that $\mathcal{L}_P$ is a totally geodesic submanifold of $(GL_n, \overline{g})$ of the same dimension. Moreover $L_Q|_{O_n}: (O_n, \overline{g}) \to (\mathcal{L}_P, \overline{g})$ is also an isometry. Furthermore every $X \in GL_n$ belongs to the submanifold $\mathcal{L}_{XX^T}$, because $XX^T$ is a symmetric positive definite matrix. Therefore $GL_n = \cup_P \mathcal{L}_P$ is a foliation of $(GL_n,\overline{g})$ as predicted. We can conclude the proof, because the transposition is an isometry of $(GL_n, \overline{g})$ onto itself mapping $\mathcal{L}_P$ onto $\mathcal{R}_P$ for every positive definite matrix $P$ (see \cite{DoPe2015} Prop. 1.2).
\end{proof}

\begin{defi}
Let $G, H$ be in the same connected component of $O_n$ \\(so $G^{-1}H \in SO_n$). We call \emph{geodesic arc joining} $G$ \emph{and} $H$ \emph{in} $O_n$ any geodesic curve \\ $\gamma: [0,1] \to (O_n, g)$ with $\gamma(0) = G$, $\gamma(1) = H$. 
\end{defi}

\begin{rem}
As in \cite{DoPe2015} Cor. 2.2, from Proposition \ref{conseguenze_bi-invarianza} (b) we can get that there is a bijection between the geodesic arcs connecting $G, H$ and the real skew symmetric solutions of the matrix equation $exp(X) = G^{-1}H$, i.e. the real skew symmetric logarithms of the matrix $G^{-1} H \in SO_n$.

From Theorem \ref{partskewsymmlogTHM}, we get the following 
\end{rem}

\begin{prop}\label{countable-geodesics}
Let $G, H$ be in the same connected component of $O_n$ and such that all the following conditions on $G^{-1}H$ hold:

- its nonreal complex eigenvalues are all of multiplicity $1$;

- either $-1$ is not an eigenvalue or it has multiplicity $2$;

- either $1$ is not an eigenvalue or it has multiplicity at most $2$.

Then the family of geodesic arcs joining $G$ and $H$ is countable.

\end{prop}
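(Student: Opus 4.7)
My plan is to assemble the proof directly from the preceding Remark and from Theorem~\ref{partskewsymmlogTHM}. The Remark immediately before the Proposition already sets up a bijection between the set of geodesic arcs joining $G$ and $H$ in $(O_n,g)$ and the set of real skew symmetric logarithms of $G^{-1}H$; since $G$ and $H$ lie in the same connected component of $O_n$, the matrix $G^{-1}H$ belongs to $SO_n$, so this bijection makes sense and the question becomes: count the real skew symmetric logarithms of $G^{-1}H$.

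The hypotheses of the Proposition on the eigenvalues of $G^{-1}H$ are exactly those assumed in Theorem~\ref{partskewsymmlogTHM}, so I will apply that theorem with $R := G^{-1}H$. It gives that the set of real skew symmetric logarithms of $R$ is a discrete lattice of rank $\lfloor n/2 \rfloor$ inside $\mathcal{A}_n$; explicitly, every such logarithm has the form $B + 2\pi \sum_{h=1}^{\lfloor n/2 \rfloor} r_h B_h$ for some $(r_1,\dots,r_{\lfloor n/2 \rfloor}) \in \ZZ^{\lfloor n/2 \rfloor}$, with $B$ and the $B_h$'s as fixed in the statement of that theorem.

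To conclude, I just need to observe that a discrete lattice of rank $\lfloor n/2 \rfloor$ in $\mathcal{A}_n$ is in bijection with $\ZZ^{\lfloor n/2 \rfloor}$, hence countable. Composing with the bijection from the Remark, the set of geodesic arcs joining $G$ and $H$ is countable, which is the claim.

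There is no substantive obstacle here: the conceptual work has been done in Theorem~\ref{all_skew_symmetric_logarithms} and in Theorem~\ref{partskewsymmlogTHM}, and the bridge from geodesics to skew symmetric logarithms is provided by Proposition~\ref{conseguenze_bi-invarianza}(b) via the Remark. The only thing to be mildly careful about is that $G$ and $H$ are required to be in the \emph{same} connected component of $O_n$, which is exactly what guarantees $G^{-1}H \in SO_n$ and therefore that the hypotheses of Theorem~\ref{partskewsymmlogTHM} can be invoked.
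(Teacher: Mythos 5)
Your proposal is correct and follows exactly the route the paper intends: the Remark immediately preceding the Proposition supplies the bijection between geodesic arcs and real skew symmetric logarithms of $G^{-1}H$, and Theorem~\ref{partskewsymmlogTHM} identifies the latter set as a discrete rank-$\lfloor n/2 \rfloor$ lattice, hence countable. The paper gives no separate proof beyond this deduction, so your argument matches it in substance and structure.
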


\begin{rem}\label{princ_geod}
Let $\pm \mathbf{i} \zeta_1 , \cdots , \pm \mathbf{i} \zeta_s$ (with $\zeta_1 , \cdots , \zeta_s >0$ and $\zeta_j \ne \zeta_h$ for $j \ne h$) be the distinct nonzero eigenvalues of a given real skew symmetric matrix $A$. We know that there exists an SVD system of skew symmetric matrices $A_1 , \cdots , A_s$ such that $A = \zeta_1 A_1 + \cdots + \zeta_s A_s$, and, moreover, $exp(A) = I_n + \sum_{j = 1}^s[\sin\zeta_j A_j + (1-\cos\zeta_j)A_j^2]$ (remember Theorem \ref{SVD}).

Hence $tA=t \zeta_1 A_1 + \cdots + t \zeta_s A_s$, so we get that the generic geodesic $\alpha(t)$ of $(O_n, g)$  with $\alpha(0) = G$ can be expressed as 
$$
\alpha(t) = G exp(tA) = G + \sum_{j = 1}^s[\sin(t\zeta_j) GA_j + [1-\cos(t\zeta_j)]GA_j^2]
$$
where $s \le \lfloor n/2 \rfloor$.

If $\alpha(t)$ is a geodesic arc joining $G, H \in O_n$, then $A$ is a skew symmetric logarithm of $G^{-1}H$ . Hence, by Theorem \ref{all_skew_symmetric_logarithms}, there exist a real skew symmetric principal logarithm $B$ of $G^{-1} H$, an SVD system of skew symmetric matrices $C_1, \cdots , C_s$  all commuting with $B$ and $l_1, \cdots , l_s \in \ZZ$ such that
$
A = B + 2\pi \sum_{j= 1}^s l_j C_j.
$

Thus the geodesic arc  $\alpha(t)$ is
\begin{multline*}
\alpha(t) = G exp(tB)\{I_n + \sum_{j=1}^s(\sin(2 \pi t l_j)C_j + (1-\cos(2 \pi t l_j))C_j^2) \} = \\ 
G exp(tB) + Gexp(tB)\{\sum_{j=1}^s(\sin(2 \pi t l_j)C_j + (1-\cos(2 \pi t l_j))C_j^2)\} = \alpha_{princ}(t) + \alpha_{var}(t)
\end{multline*}
where $\alpha_{princ}(t)=G exp(tB)$ is still a geodesic arc joining $G, H$ corresponding to the real skew symmetric principal logarithm B of $G^{-1}H$, we call \emph{principal geodesic arc}, while the \emph{variation}, $\alpha_{var}(t)$, is a loop in $M_n$ at the null matrix.
\end{rem}

\begin{prop}\label{rat_numb_SO_n}
With the same notations as in Remark \ref{princ_geod}, let 

$\alpha(t) = G exp(tA) = G + \sum_{j = 1}^s[\sin(t\zeta_j) GA_j + [1-\cos(t\zeta_j)]GA_j^2]$ be a nonconstant geodesic of $(O_n, g)$. Then

a) if there exist $j, h \in \{1, \cdots, s\}$ such that $\dfrac{\zeta_j}{\zeta_h} \notin \QQ$, then $t \mapsto \alpha(t)$ is an injective immersion of $\RR$ into $O_n$;

b) if \ \ $\dfrac{\zeta_j}{\zeta_h} \in \QQ$ for every $j, h \in \{1, \cdots, s\}$, then $\alpha(t)$ is a closed (i.e. periodic) geodesic of $(O_n, g)$.
\end{prop}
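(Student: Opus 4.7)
The plan is to reduce everything to understanding the set $\{ s \in \RR \colon \exp(sA) = I_n \}$ via the eigenvalue description of $A$. Indeed $\alpha(t_1) = \alpha(t_2)$ is equivalent to $\exp((t_1-t_2)A) = I_n$, so the same set governs both the injectivity asked in (a) and the periodicity asked in (b). First note that $\alpha'(t) = G\exp(tA)A$ is never zero because $A \ne 0$ (the geodesic being nonconstant) and $G\exp(tA) \in GL_n$, so $\alpha$ is an immersion in both cases.

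By Remark \ref{eigenvalues_B} the matrix $A$ is diagonalizable over $\CC$ with eigenvalues $\pm \mathbf{i}\zeta_j$ for $j = 1 , \ldots , s$ and possibly $0$. Hence $\exp(sA)$ is diagonalizable in the same basis with eigenvalues $e^{\pm \mathbf{i} s \zeta_j}$ and $1$, so $\exp(sA) = I_n$ if and only if $s\zeta_j \in 2\pi \ZZ$ for every $j = 1 , \ldots , s$. This is the key reduction, and (a) and (b) become purely arithmetic consequences.

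For part (a), suppose some $\zeta_j/\zeta_h \notin \QQ$ and let $s \ne 0$ satisfy $\exp(sA) = I_n$. Then $s\zeta_j = 2\pi k_j$ and $s\zeta_h = 2\pi k_h$ for integers $k_j, k_h$, both nonzero since $s \ne 0$ and $\zeta_j , \zeta_h > 0$; dividing gives $\zeta_j/\zeta_h = k_j/k_h \in \QQ$, a contradiction. Hence only $s = 0$ works, so $\alpha$ is also injective and therefore an injective immersion. For part (b), write $\zeta_j/\zeta_1 = p_j/q_j$ with $p_j \in \ZZ$ and $q_j \in \ZZ_{>0}$, and set $T = 2\pi \, \mathrm{lcm}(q_1, \ldots , q_s)/\zeta_1 > 0$; then $T\zeta_j \in 2\pi \ZZ$ for each $j$, whence $\exp(TA) = I_n$ and $\alpha(t+T) = \alpha(t)$ for every $t$, exhibiting $\alpha$ as closed. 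I anticipate no serious obstacle: once Remark \ref{eigenvalues_B} is invoked the remainder is elementary, with the only subtle point being the observation that $s \ne 0$ together with $\zeta_j > 0$ prevents the integers $k_j$ from vanishing in the rationality argument.
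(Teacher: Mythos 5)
Your proof is correct. The route is mildly different from the paper's: both arguments reduce to showing that $\alpha(t) = \alpha(t')$ forces $(t-t')\zeta_k \in 2\pi\ZZ$ for every $k$, but the paper extracts this by staying inside the SVD expansion $\alpha(t) = G + \sum_j[\sin(t\zeta_j)GA_j + (1-\cos(t\zeta_j))GA_j^2]$, multiplying the resulting matrix identity by $A_k$ and separating symmetric and skew-symmetric parts to read off $\sin(t\zeta_k) = \sin(t'\zeta_k)$ and $\cos(t\zeta_k) = \cos(t'\zeta_k)$; you instead pass directly to the complex diagonalization of $A$ (Remark \ref{eigenvalues_B}) and characterize $\exp(sA) = I_n$ from the eigenvalues $e^{\pm \mathbf{i}s\zeta_j}$. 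The two routes are equivalent, and the ensuing arithmetic (ratios of integers in (a), common denominator/lcm to build a period $T$ in (b)) is the same. Your version is slightly more economical because it bypasses the $A_k$-multiplication step, and it has the small merit of explicitly verifying the immersion claim via $\alpha'(t) = G\exp(tA)A \ne 0$, which the paper leaves implicit.
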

 
\begin{proof}
Suppose that $\alpha(t)=\alpha(t')$. This gives

$
\sum_{j = 1}^s[\sin(t\zeta_j)A_j +(1-\cos(t\zeta_j)) A_j^2] =
\sum_{j = 1}^s[\sin(t'\zeta_j)A_j +(1-\cos(t'\zeta_j)) A_j^2] 
$.

Multiplying by $A_k$ (for $k= 1, \cdots , s$) the above equality and comparing the symmetric and skew symmetric parts, we get

$\sin(t\zeta_k) = \sin(t'\zeta_k)$ and $\cos(t\zeta_k) = \cos(t'\zeta_k)$ for every $k$ and so  standard arguments give $\dfrac{\zeta_j}{\zeta_h} \in \QQ$ for every $j, h \in \{1, \cdots , s \}$. This concludes (a).

Assume that $\zeta_j / \zeta_h \in \QQ$ for any $j, h= 1, \cdots , s$.  After reducing $\zeta_j / \zeta_1$, for $j = 1, \cdots , s$, to a common denominator, we get: $\zeta_j / \zeta_1 = r_j / m$, with $r_j, m \in \NN \setminus \{0\}$. 

If $T= 2 \pi m / \zeta_1$, it is easy to check $\alpha(t+T) = \alpha(t)$
for any $t \in \RR$. This gives (b).
\end{proof}

\begin{thm}\label{minimalgeodesics}
Let $G, H$ be in the same connected component of $O_n$. The curves of minimal length among the curves in $(O_n, g)$, joining $G$ and $H$, are precisely the principal geodesic arcs i.e. the curves
$\alpha: [0,1] \to O_n$ of type $\alpha(t) = Gexp(tB)$, where $B$ is any skew symmetric principal logarithm of $G^{-1}H$.

Moreover, denoted by $d$ the distance associated to the metric $g$, we have $d(G,H) = \sqrt{\sum_{j=1}^n |\log \mu_j|^2}$, where $\mu_1, \cdots , \mu_n$ 
are the (possibly repeated) $n$ eigenvalues  of $G^{-1}H$ and $\log \mu$ denotes the principal complex logarithm of the complex number $\mu$.
\end{thm}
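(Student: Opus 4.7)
The plan is to combine the classification of geodesics from Proposition \ref{conseguenze_bi-invarianza}(b) with the length computation afforded by the bi-invariance of $g$, and then apply Proposition \ref{inequality}(3) to single out the length minimizers.

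First, I would compute the length of an arbitrary geodesic arc joining $G$ and $H$. By Proposition \ref{conseguenze_bi-invarianza}(b), any such arc has the form $\alpha(t) = G\exp(tA)$ with $A \in \mathcal{A}_n$, and the condition $\alpha(1)=H$ means $\exp(A) = G^{-1}H$, i.e.\ $A$ is a real skew symmetric logarithm of $G^{-1}H$. Since $\alpha'(t) = G A \exp(tA)$ and $\exp(tA)^T \exp(tA) = I_n$, a direct computation (equivalently, bi-invariance of $g$) gives $g_{\alpha(t)}(\alpha'(t),\alpha'(t)) = tr(\exp(tA)^T A^T G^T G A \exp(tA)) = tr(A^T A) = -tr(A^2)$, which is constant in $t$. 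Hence the length of $\alpha$ equals $\sqrt{-tr(A^2)}$.

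Next, by Proposition \ref{inequality}(3) applied to $R = G^{-1}H$, we have $-tr(A^2) \ge 2\sum_{k=1}^p m_k \theta_k^2$, with equality if and only if $A \in \mathcal{A}plog(G^{-1}H)$. Consequently, among all geodesic arcs joining $G$ and $H$, those with minimum length are precisely the principal geodesic arcs $t \mapsto G\exp(tB)$ with $B \in \mathcal{A}plog(G^{-1}H)$, and their common length equals $\sqrt{2\sum_{k=1}^p m_k \theta_k^2}$. Since $O_n$ is compact, each connected component is a complete Riemannian manifold; by Hopf--Rinow, the infimum of lengths among all piecewise smooth curves joining $G$ and $H$ is attained, and any minimizing curve is (a reparametrization of) a geodesic arc. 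This reduces the minimization over all curves to the minimization over geodesic arcs just handled, proving the first assertion.

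Finally, I would translate $2\sum_{k=1}^p m_k \theta_k^2$ into the eigenvalue formula. Following the Notations \ref{eigenvalues_R}, the eigenvalues of $G^{-1}H \in SO_n$ are $e^{\pm\mathbf{i}\theta_k}$ (each with multiplicity $m_k$) for $k=1,\dots,p$, together with $1$ of multiplicity $n-2m$. Since $\theta_k \in (0,\pi]$, the principal complex logarithm satisfies $|\log e^{\pm\mathbf{i}\theta_k}|^2 = \theta_k^2$ (including the case $\theta_1 = \pi$, since the modulus is insensitive to the branch choice $\pm\mathbf{i}\pi$) and $|\log 1|^2 = 0$, whence $\sum_{j=1}^n |\log \mu_j|^2 = 2\sum_{k=1}^p m_k \theta_k^2$, yielding the claimed formula for $d(G,H)$.

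The only genuinely non-routine ingredient is Proposition \ref{inequality}(3), which is already proved and can be used as a black box; everything else is either bi-invariance bookkeeping or the standard Hopf--Rinow reduction from arbitrary curves to geodesic arcs. The mildly delicate point is to keep track of the case $\theta_1 = \pi$, where $\mathcal{A}plog(G^{-1}H)$ is not a singleton (cf.\ Theorem \ref{Characterize_plog}) but this only enriches the family of minimizing arcs without affecting the distance value.
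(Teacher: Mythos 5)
Your proof is correct and follows essentially the same route as the paper: Hopf--Rinow reduces the problem to geodesic arcs $G\exp(tA)$ with $A$ a skew symmetric logarithm of $G^{-1}H$, the constant-speed computation gives $\mathrm{length}(\alpha)=\sqrt{-tr(A^2)}$, and Proposition \ref{inequality}(3) singles out the principal logarithms as the minimizers, after which the eigenvalue formula is immediate. The only cosmetic difference is that the paper invokes Remark \ref{princ_geod} to write $\alpha=\alpha_{\mathrm{princ}}+\alpha_{\mathrm{var}}$, which you bypass by applying Proposition \ref{inequality}(3) directly; this does not change the argument.
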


\begin{proof}
Since $(SO_n, g)$ and $(O_n^-,g)$ are geodesically complete, by Hopf-Rinow theorem (see for instance \cite{O'N1983} Chapt.\,5, 21--22) there are curves of minimal length among the curves joining $G, H$ (belonging to the same connected component of $O_n$); they are geodesic arcs joining $G, H$  and have length equal to the Riemannian distance $d(G,H)$.  
A geodesic arc $\alpha(t)$ joining $G, H$  is a curve of type $\alpha(t) = G exp(tA)$, with $A$ real skew symmetric logarithm of $G^{-1} H \in SO_n$.

By Remark \ref{princ_geod}, $\alpha(t) =  \alpha_{princ}(t) + \alpha_{var}(t)$, where $ \alpha_{princ}(t) = G exp(tB)$ and $B$ is a real skew symmetric principal logarithm of $G^{-1} H$. Since $g(\stackrel{.}{\alpha}(t), \stackrel{.}{\alpha}(t))$ is constant, by Proposition \ref{inequality} we have 

 \begin{multline*}
length(\alpha)= \int_0^1 \sqrt{g(\stackrel{.}{\alpha}(t), \stackrel{.}{\alpha}(t))}dt=
\sqrt{g(\stackrel{.}{\alpha}(0), \stackrel{.}{\alpha}(0))} = 
\\
\sqrt{tr(A^T A}) = \sqrt{-tr(A^2)} \ge \sqrt{-tr(B^2)},
\end{multline*}

with equality if and only if $A$ is a real skew symmetric principal logarithm, i.e. if and only if $\alpha(t)$ is a principal geodesic arc. This gives the first part of the statement. 

Finally, by \ref{eigenvalues_B}, $d(G,H) = \sqrt{-tr(B^2)} = \sqrt{\sum_{j=1}^n |\lambda_j|^2}$, where $\lambda_1 , \cdots, \lambda_n$ are the (possibly repeated) eigenvalues of any real skew symmetric principal logarithm of $G^{-1}H$, hence we can conclude because $|\lambda_j| =| \log \mu_j |$ for every $j$.
\end{proof}

\begin{cor}\label{misura-diametro}
The diameter of $(O_n, g)$ is equal to $\sqrt{2 \lfloor n/2 \rfloor} \pi$.
\end{cor}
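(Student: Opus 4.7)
The plan is to combine the distance formula of Theorem~\ref{minimalgeodesics} with a spectral maximization. For any $G, H$ in the same connected component of $O_n$ one has $G^{-1}H \in SO_n$ and
$$
d(G,H)^2 \;=\; \sum_{j=1}^n |\log \mu_j|^2,
$$
where $\mu_1, \dots, \mu_n$ are the (possibly repeated) eigenvalues of $G^{-1}H$. Each term satisfies $|\log \mu_j|^2 \le \pi^2$, with equality if and only if $\mu_j = -1$, so the diameter computation reduces to maximizing this sum over all admissible spectra of $SO_n$-matrices.

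For the upper bound I would argue as follows. Using the Notations~\ref{eigenvalues_R}, the formula above rewrites as $d(G,H)^2 = 2\sum_{k=1}^p m_k \theta_k^2 \le 2m \pi^2$, where $2m$ is the number of non-$1$ eigenvalues of $G^{-1}H$. Since $G^{-1}H \in SO_n$, the eigenvalue $-1$ must occur with even multiplicity (otherwise $\det(G^{-1}H) = -1$); combined with the parity of $n$, this forces the multiplicity of the eigenvalue $+1$ to be positive whenever $n$ is odd. Hence in both parities $2m \le 2\lfloor n/2 \rfloor$, and therefore $d(G,H) \le \sqrt{2\lfloor n/2 \rfloor}\,\pi$.

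For attainment I would exhibit $G = I_n$ and $H = \operatorname{diag}(-1, \dots, -1, 1, \dots, 1)$ with exactly $2\lfloor n/2 \rfloor$ entries equal to $-1$; then $\det H = 1$, so $H \in SO_n$, and $G^{-1}H = H$ has $2\lfloor n/2 \rfloor$ eigenvalues equal to $-1$ and the remaining ones (if any) equal to $+1$. The distance formula then gives exactly $d(G, H) = \sqrt{2\lfloor n/2 \rfloor}\,\pi$. Multiplying $H$ on the left by any fixed element of $O_n^-$ produces a pair in $O_n^-$ realizing the same distance. Interpreting the diameter of the disconnected manifold $(O_n, g)$ as the maximum of the diameters of its two connected components (points in distinct components cannot be joined by a curve), the corollary follows.

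The argument is essentially elementary once Theorem~\ref{minimalgeodesics} is available; the only delicate point, and the reason the answer involves $\lfloor n/2 \rfloor$ rather than $n$, is the parity constraint on the multiplicity of the eigenvalue $-1$ for matrices in $SO_n$, which is what loses one unit of capacity when $n$ is odd.
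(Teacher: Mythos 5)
Your proof is correct and follows the same route as the paper: apply the distance formula $d(G,H)=\sqrt{\sum_j|\log\mu_j|^2}$ from Theorem~\ref{minimalgeodesics} and maximize over the admissible spectra of $G^{-1}H\in SO_n$. The paper merely asserts the result ``follows directly'' from that theorem; your write-up correctly supplies the implicit details (the bound $|\log\mu_j|\le\pi$, the parity argument showing the number of non-unit eigenvalues is an even integer $\le 2\lfloor n/2\rfloor$, and the maximizing pair $(I_n,\ \operatorname{diag}(-I_{2\lfloor n/2\rfloor},\ I_{n-2\lfloor n/2\rfloor}))$).
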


\begin{proof}
By definition the diameter is 

$diam(O_n, g)=
sup \{d(G,H) \, / \, G,H \mbox{ in the same connected component of } O_n \}$. 

So the result follows directly from Theorem \ref{minimalgeodesics}.
\end{proof}

\begin{rem}\label{minimal_length}
The geodesics of minimal length among curves in $O_n$ joining $G$ and $H$ are all periodic or are all injective immersions $\alpha: \RR \to O_n$.

This follows from the Proposition \ref{rat_numb_SO_n} and from the fact that different skew symmetric principal logarithms have always the same eigenvalues.

By Theorem \ref{Characterize_plog}, we get that if $-1$ is not an eigenvalue of $G^{-1} H$, then the geodesic of minimal length joining $G$ and $H$ (belonging to the same connected component of $O_n$) is unique, while if $-1$ is an eigenvalue of $G^{-1} H$ with (even) multiplicity $2m_1$, then the geodesics of minimal length joining $G$ and $H$ (both in $SO_n$ or in $O_n^-$ as above) can be parametrized by means of the manifold $\mathcal{M}_{2m_1}$ diffeomorphic to $\mathcal{A}plog(G^{-1} H)$.
\end{rem}

\begin{prop}\label{grassmannian}
Let $\mathcal{N}_{p, n-p}$ be the set of real symmetric orthogonal matrices of order $n$ with the eigenvalues $1$ of multiplicity $p$ and $-1$ of multiplicity $n-p$. Then $\mathcal{N}_{p, n-p}$ is a submanifold of the vector space $S_n$ of real symmetric matrices of order $n$, diffeomorphic to the Grassmannian $Gr(p, n)$ of $p$-dimensional vector subspaces of $\RR^n$. In particular $\mathcal{N}_{1, n-1}$ is diffeomorphic to $\PP^{n-1}(\RR)$.
\end{prop}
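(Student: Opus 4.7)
The plan is to identify $\mathcal{N}_{p,n-p}$ with the Grassmannian $Gr(p,n)$ via the assignment that sends a symmetric orthogonal matrix with eigenvalues $\pm 1$ to its $+1$-eigenspace, and then verify that this identification makes $\mathcal{N}_{p,n-p}$ an embedded submanifold of $S_n$ diffeomorphic to $Gr(p,n)$.

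First I would note that any $M \in \mathcal{N}_{p,n-p}$ is real symmetric, hence orthogonally diagonalizable, and by the hypothesis on its eigenvalues can be written in the form $M = Q \, diag(I_p, -I_{n-p}) \, Q^T$ for some $Q \in O_n$. This exhibits $\mathcal{N}_{p,n-p}$ as the orbit of $M_0 := diag(I_p, -I_{n-p})$ under the smooth action of $O_n$ on $S_n$ defined by $\phi(Q, X) = Q X Q^T$, analogously to the treatment of $\mathcal{M}_{2\mu}$ in Remark \ref{OSSMatrices}. Standard Lie-theoretic facts then imply that $\mathcal{N}_{p,n-p}$ is an embedded submanifold of $S_n$ diffeomorphic to the homogeneous space $O_n/\mathrm{Stab}(M_0)$.

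Next I would compute the stabilizer: $Q M_0 Q^T = M_0$ if and only if $Q$ commutes with $M_0$, which, writing $Q$ in blocks corresponding to the eigenspaces of $M_0$, forces $Q = diag(Q_1, Q_2)$ with $Q_1 \in O_p$ and $Q_2 \in O_{n-p}$. Hence $\mathrm{Stab}(M_0) \cong O_p \times O_{n-p}$, and
\[
\mathcal{N}_{p,n-p} \;\cong\; O_n/(O_p \times O_{n-p}) \;=\; Gr(p,n).
\]
Alternatively (and this is the conceptual content), one may define directly the map $\Phi: \mathcal{N}_{p,n-p} \to Gr(p,n)$ sending $M$ to its $+1$-eigenspace $V_M = \ker(M - I_n)$, with smooth inverse $V \mapsto 2 P_V - I_n$, where $P_V$ is the orthogonal projection onto $V$; this inverse is polynomial in the entries of any orthonormal frame for $V$ and gives an explicit diffeomorphism.

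Finally, for the special case $p = 1$, the Grassmannian $Gr(1,n)$ of lines in $\RR^n$ is by definition the real projective space $\mathbb{P}^{n-1}(\RR)$, yielding the last assertion. The only mildly technical step is checking that $O_n/(O_p \times O_{n-p})$ is in fact $Gr(p,n)$ (transitivity of $O_n$ on $p$-planes, which is immediate by completing an orthonormal basis of the plane to one of $\RR^n$) and that the orbit map is an embedding; both are routine and the main substantive ingredient is just the observation that symmetric orthogonal matrices with eigenvalues $\pm 1$ are in bijective correspondence with orthogonal decompositions $\RR^n = V \oplus V^\perp$ via $M \leftrightarrow (2P_V - I_n)$.
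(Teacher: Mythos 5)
Your proof is correct and takes essentially the same route as the paper: realize $\mathcal{N}_{p,n-p}$ as the $O_n$-orbit of $diag(I_p,-I_{n-p})$ under conjugation, compute the isotropy group as $O_p\times O_{n-p}$, and invoke the homogeneous-space description $O_n/(O_p\times O_{n-p})\cong Gr(p,n)$. Your additional remark giving the explicit diffeomorphism $M\mapsto\ker(M-I_n)$ with inverse $V\mapsto 2P_V-I_n$ is a nice, concrete version of the identification that the paper delegates to a citation, but it is supplementary rather than a different approach.
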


\begin{proof}
Analogously to Remark \ref{OSSMatrices} and Proposition \ref{class_with_Pfaff}, we denote by $\phi (Q, N) = Q N Q^T$ the left action of $O_n$ on $S_n$. By spectral theorem, $\mathcal{N}_{p, n-p}$ is the orbit of 
$diag(I_p, -I_{n-p})$. 
The isotropy group of $diag(I_p , - I_{n-p})$ consists of orthogonal matrices commuting with this matrix and, therefore, of block diagonal matrices belonging to $O_p \times O_{n-p}=\{ diag(U,U')\ / \ U \in O_p,\  U' \in O_{n-p} \}$. Again $\mathcal{N}_{p, n-p}$ is a differentiable submanifold of $S_n$ diffeomorphic to $O_n / (O_p \times O_{n-p})$ (see for instance \cite{Popov1991} p.1). We conclude, since this quotient is the expected Grassmannian (see for instance \cite{BrtD1985} I, 4.12).
\end{proof}

\begin{defis}\label{weakly_diametral}
Let $G, H$ be in the same connected component of $O_n$. We say that they form 

a) a \emph{weakly diametral pair} if there is a geodesic curve $\gamma$ such that $\gamma(0)= G$, $\gamma(1) = \gamma(-1) = H$,

b) a \emph{diametral pair} if $d(G,H) = diam(O_n, g) = \sqrt{2 \lfloor n/2 \rfloor} \pi$.
\end{defis}

\begin{rem}\label{diameter}
If $A \in \mathcal{A}_n$, then $[exp(A)]^T = exp(-A)$. So it is easy to show that the matrices $G, H$ (as above) form a weakly diametral pair if and only if $G^{-1}H$ is symmetric.
Hence $G, H$ form a weakly diametral pair if and only if $G^{-1}H \in \mathcal{N}_{n-2q, 2q}$ for some $q=0, \cdots , \lfloor n/2 \rfloor$, i.e. if and only if $H \in \cup_{q = 0}^{\lfloor n/2 \rfloor} L_G(\mathcal{N}_{n-2q, 2q})$ ($L_G$ denotes the left translation) or equivalently, by Theorem \ref{Characterize_plog}, if and only if $\mathcal{A}plog(G^{-1}H)$ is symmetric with respect to the null matrix. Proposition \ref{grassmannian} allows to obtain the following 
\end{rem}

\begin{prop}\label{weakly_diametral_pairs}
Let $\mathcal{N}_{n-2q, 2q}$ be as in Proposition \ref{grassmannian} and $G \in O_n$. Then the set of $H \in O_n$ such that $G, H$ form a weakly diametral pair, is the disjoint union $\cup_{q = 0}^{\lfloor n/2 \rfloor} L_G(\mathcal{N}_{n-2q, 2q})$, where every $L_G(\mathcal{N}_{n-2q, 2q})$ is a submanifold of $O_n$, diffeomorphic to $Gr(n-2q, n)$.
\end{prop}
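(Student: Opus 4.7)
The plan is to assemble the statement from the characterization of weakly diametral pairs established in Remark \ref{diameter} together with the Grassmannian identification of Proposition \ref{grassmannian}.

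First I would apply Remark \ref{diameter}: a pair $G, H$ in the same connected component of $O_n$ is weakly diametral if and only if $M := G^{-1}H$ is a symmetric orthogonal matrix. Every such $M$ has only $\pm 1$ as eigenvalues, so $M \in \mathcal{N}_{n-k, k}$ where $k$ is the multiplicity of the eigenvalue $-1$. Since $G, H$ lie in the same connected component, we have $M \in SO_n$, so $\det(M) = (-1)^k = 1$; this forces $k = 2q$ for some $q \in \{0, 1, \ldots, \lfloor n/2 \rfloor\}$. Equivalently, $H \in L_G(\mathcal{N}_{n-2q, 2q})$ for this value of $q$, yielding the union description.

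Next I would check that the union is disjoint: matrices belonging to $\mathcal{N}_{n-2q_1, 2q_1}$ and $\mathcal{N}_{n-2q_2, 2q_2}$ with $q_1 \neq q_2$ have distinct spectra (different multiplicities of $-1$), so the sets $\mathcal{N}_{n-2q, 2q}$ are pairwise disjoint; since $L_G$ is a bijection, the same holds for their images. Finally I would transport the differential structure: by Proposition \ref{grassmannian}, each $\mathcal{N}_{n-2q, 2q}$ is a smooth submanifold (of $S_n$, hence of $M_n$) diffeomorphic to $Gr(n-2q, n)$; since $\mathcal{N}_{n-2q, 2q} \subset O_n$ and $O_n$ is embedded in $M_n$, it is a fortiori an embedded submanifold of $O_n$. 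The left translation $L_G$ is a diffeomorphism of $O_n$ onto itself, so $L_G(\mathcal{N}_{n-2q, 2q})$ is an embedded submanifold of $O_n$, and composing $L_G$ with the diffeomorphism supplied by Proposition \ref{grassmannian} gives the desired identification with $Gr(n-2q, n)$.

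No serious obstacle stands in the way; the statement is essentially an assembly result once Remark \ref{diameter} and Proposition \ref{grassmannian} are in place. The only delicate point is the parity analysis that forces the multiplicity of $-1$ to be even — this is what pins down the precise index set $\{0, 1, \ldots, \lfloor n/2 \rfloor\}$ in the disjoint union and ensures that the pieces really correspond to pairs in the same connected component.
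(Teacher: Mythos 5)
Your proposal is correct and follows essentially the same route the paper takes: Remark \ref{diameter} already records that weak diametrality is equivalent to $G^{-1}H$ being a symmetric (hence special, since $G,H$ are in the same component) orthogonal matrix, which forces the multiplicity of $-1$ to be even, and Proposition \ref{grassmannian} then supplies the Grassmannian diffeomorphism transported by $L_G$. Your explicit parity check and the disjointness-by-spectrum observation are exactly the steps the paper leaves implicit, so there is nothing to add.
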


\begin{prop}\label{diametral_pairs}
Let $G, H$ be in the same connected component of $O_n$. Then they form a diametral pair if and only if $H \in L_G(\mathcal{N}_{n-2\lfloor n/2 \rfloor, 2\lfloor n/2 \rfloor})$; hence a diametral pair is weakly diametral too.

In particular $G, H$ is a diametral pair if and only if $H=-G$ in case of $n$ even and  if and only if $H \in L_G(\mathcal{N}_{1, n-1})$ in case of $n$ odd. In this last case the set of points $H$ such that $G,H$ is a diametral pair is a submanifold of $O_n$ diffeomorphic to $\PP^{n-1}(\RR)$.
\end{prop}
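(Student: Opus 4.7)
My plan is to reduce everything to the explicit distance formula of Theorem \ref{minimalgeodesics}, namely
$d(G,H)^2=\sum_{j=1}^n|\log\mu_j|^2$, where $\mu_1,\dots,\mu_n$ are the eigenvalues of $G^{-1}H\in SO_n$, and $\log$ is the principal complex logarithm. Since every $\mu_j$ lies on the unit circle, $\log\mu_j=i\theta_j$ with $\theta_j\in(-\pi,\pi]$, and consequently $|\log\mu_j|^2\le\pi^2$ with equality precisely when $\mu_j=-1$.

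The next step is a parity observation: the eigenvalue $-1$ of $G^{-1}H\in SO_n$ must occur with even multiplicity, because the nonreal eigenvalues come in conjugate pairs (contributing $+1$ each to the determinant), the eigenvalue $1$ contributes $+1$, and the product of all eigenvalues equals $\det(G^{-1}H)=1$. Hence the multiplicity of $-1$ is at most $2\lfloor n/2\rfloor$, and so
\[
d(G,H)^2=\sum_{j=1}^n|\log\mu_j|^2\le 2\lfloor n/2\rfloor\,\pi^2 = \bigl(\operatorname{diam}(O_n,g)\bigr)^2,
\]
with equality if and only if $G^{-1}H$ has $-1$ as eigenvalue of multiplicity exactly $2\lfloor n/2\rfloor$ and $1$ as eigenvalue of multiplicity $n-2\lfloor n/2\rfloor$ (and no nonreal eigenvalues). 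Such a $G^{-1}H$ is an orthogonal matrix with only real eigenvalues $\pm 1$, hence diagonalizable by a real orthogonal conjugation, hence symmetric; therefore $G^{-1}H\in\mathcal{N}_{n-2\lfloor n/2\rfloor,\,2\lfloor n/2\rfloor}$. Conversely, every element of $\mathcal{N}_{n-2\lfloor n/2\rfloor,\,2\lfloor n/2\rfloor}$ realizes $d(G,H)=\sqrt{2\lfloor n/2\rfloor}\,\pi$ by the same formula. This proves the first equivalence, and the fact that diametral implies weakly diametral follows at once, since $\mathcal{N}_{n-2\lfloor n/2\rfloor,\,2\lfloor n/2\rfloor}\subset\bigcup_{q=0}^{\lfloor n/2\rfloor}\mathcal{N}_{n-2q,\,2q}$ and Remark \ref{diameter} characterizes weakly diametral pairs exactly by membership of $G^{-1}H$ in the latter union.

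For the parity-dependent descriptions: if $n$ is even then $n-2\lfloor n/2\rfloor=0$, so $\mathcal{N}_{0,n}$ consists of symmetric orthogonal matrices whose only eigenvalue is $-1$, forcing the single element $-I_n$; hence $H=-G$. If $n$ is odd then $n-2\lfloor n/2\rfloor=1$, and $L_G\colon O_n\to O_n$ is a diffeomorphism mapping $\mathcal{N}_{1,n-1}$ onto $L_G(\mathcal{N}_{1,n-1})$; by Proposition \ref{grassmannian}, $\mathcal{N}_{1,n-1}$ is diffeomorphic to $Gr(1,n)=\PP^{n-1}(\RR)$, so the same holds for its image under $L_G$.

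The only slightly delicate step is the characterization of the equality case: I must justify that the upper bound $|\log\mu_j|\le\pi$ is saturated only by $\mu_j=-1$ (using that the principal argument lies in $(-\pi,\pi]$, with modulus $\pi$ reached only at $\pi$), and combine this with the determinant/parity constraint. Everything else is a direct application of results already proved in the paper (Theorem \ref{minimalgeodesics}, Remark \ref{diameter}, Proposition \ref{grassmannian}).
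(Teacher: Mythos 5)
Your proof is correct and follows essentially the same route as the paper, which simply invokes Theorem \ref{minimalgeodesics} and Proposition \ref{grassmannian}; you have filled in the intermediate steps (the eigenvalue bound $|\log\mu_j|\le\pi$, the parity constraint on the multiplicity of $-1$, and the identification of the equality case with membership in $\mathcal{N}_{n-2\lfloor n/2\rfloor,\,2\lfloor n/2\rfloor}$) that the paper leaves implicit.
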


\begin{proof}
The first characterization follows from Theorem \ref{minimalgeodesics}. We can conclude by means of the description of $\mathcal{N}_{1, n-1}$ stated in Proposition \ref{grassmannian}.
\end{proof}

\end{document}